\setlist[enumerate,1]{leftmargin=0.25in}
\setlist[itemize,1]{leftmargin=0.255in}
\theoremstyle{plain}
\newtheorem{theorem}{Theorem}
\theoremstyle{plain}
\newtheorem{lemma}[theorem]{Lemma}
\theoremstyle{plain}
\newtheorem{proposition}[theorem]{Proposition}
\theoremstyle{plain}
\newtheorem{corollary}[theorem]{Corollary}
\theoremstyle{plain}
\newtheorem*{proposition*}{Proposition}
\theoremstyle{definition}
\newtheorem{definition}{Definition}
\theoremstyle{remark}
\newtheorem*{remark*}{Remark}
\theoremstyle{definition}
\newtheorem*{notation*}{Notation}
\numberwithin{equation}{section}
\numberwithin{figure}{section}
\numberwithin{theorem}{section}
\title{Nilsequences and multiple correlations along subsequences}
\author{Anh Le}
\address{Department of Mathematics\\
  Northwestern University\\
  2033 Sheridan Road, Evanston, IL 60208-2730, USA}
\email{anhle@math.northwestern.edu}
\begin{document}
\begin{abstract}
	The results of Bergelson-Host-Kra and Leibman say that a multiple polynomial correlation sequence can be decomposed into a sum of a nilsequence (a sequence defined by evaluating a continuous function along an orbit in a nilsystem) and a null sequence (a sequence that goes to zero in density). We refine their results by proving that the null sequence goes to zero in density along polynomials evaluated at primes and Hardy sequence $(\lfloor n^c \rfloor)$. On the other hand, given a rigid sequence, we construct an example of correlation whose null sequence does not approach zero in density along that rigid sequence. As a corollary of a lemma in the proof, the formula for the pointwise ergodic average along polynomials of primes in a nilsystem is also obtained. 
\end{abstract}

\maketitle

\section{Introduction}
\label{section:introduction}
    \subsection*{History and motivation}
    
    Let $(X, \mu, T)$ be an invertible measure preserving system, $f_j \in L^{\infty}(\mu)$ and $s_j$ be an \emph{integer polynomial}, i.e. taking integer values on integers, for $0 \leq j \leq k$. Then the sequence
    \begin{equation}
        \label{equation:definition-of-correlation}
        a(n) = \int_X f_0 (T^{s_0(n)} x) \cdot f_1 (T^{s_1(n)} x) \cdots f_k (T^{s_k(n)} x) \, d \mu(x)
    \end{equation}
    is called a \emph{multiple polynomial correlation sequence}, or \emph{polynomial correlation} for short. If $s_j(n) = c_j n$ with $c_j \in \mathbb{Z}$, we call $(a(n))$ a \emph{linear correlation}. 
    
    Understanding multiple correlations has been a main goal of ergodic theorists since Furstenberg's celebrated proof of Szemer\'edi theorem. A possible way is to find connections between correlations and the sequences that have rich algebraic structures. For example, to prove generalized Khintchine's theorem, Bergelson, Host and Kra \cite{Bergelson_Host_Kra05} decompose linear correlations into sum of a \emph{nilsequence} (a sequence defined by evaluating a continuous function along an orbit in a nilsystem) and a \emph{null sequence} (a sequence that approaches zero in density) (see Section \ref{subsec:nilseq} for precise definition).
    
    This decomposition for single linear correlations ($k=1,s_j(n) = c_j n$) can be proved using Herglotz's theorem. In this case, there exists a measure $\sigma$ on the circle $\mathbb{T}$ such that $a(n) = \int_{\mathbb{T}} e^{2 \pi i n x} \, d \sigma(x)$. Decomposing $\sigma$ into discrete (atomic) and continuous (non-atomic) parts, $(a(n))$ is then a sum of an \emph{almost periodic sequence} ($1$-step nilsequence) and a null sequence. 
    
    Bergelson, Host, and Kra \cite{Bergelson_Host_Kra05} extend this classical result to $k \geq 2$ when $s_j(n) = jn$ and $(X, \mu, T)$ being ergodic. In their result, the almost periodic sequence is replaced by a \emph{$k$-step nilsequence}. By a different method, Leibman generalizes Bergelson, Host and Kra's result to the case that $s_j(n)$ are integer polynomials \cite{leib10}. Leibman himself later removes the ergodicity assumption in \cite{leib15}. 
    
    If a sequence $(a(n))$ can be decomposed into a sum of a nilsequence and a null sequence, we say $(a(n))$ has a \emph{nil+null decomposition}. In this case, the decomposition is unique (see Section \ref{subsection:backgroun-uniqueness-of-nil-null-decomposition}). The nilsequence and null sequence are then called the \emph{nil component} and the \emph{null component} of $(a(n))$, respectively.
    
    Nilsequences in general have been studied extensively since their introduction by Bergelson, Host and Kra \cite{Bergelson_Host_Kra05} in 2005. Similarly, the nil components in the nil+null decomposition of multiple correlations are also well studied. For example, Bergelson, Host and Kra \cite{Bergelson_Host_Kra05} analyze this component to prove a generalization of Khintchine's theorem. Moreira and Richter \cite{Moreira_Richter18} show that this component arises from a system whose spectrum is contained in the spectrum of the original system. 
    
    On the other hand, little is known about the null component. The goal of this paper is to fill in that gap. We show that the null component goes to zero in density along polynomials evaluated at primes and Hardy sequence $(\lfloor n^c \rfloor)$. Nevertheless, for any \emph{rigid sequence} $(r_n)$, there is a correlation whose null component is not null along $(r_n)$ (see Section \ref{subsection:backgroun-rigidity-sequences} for definition of rigid sequences).
    
    It is worth mentioning that a related conjecture has been raised by Frantzikinakis \cite[Problem 13]{frant17}. Letting $p_n$ denote the $n^{th}$ prime, Frantzikinakis conjectures that for a linear correlation in an ergodic system $(a(n))$, there exists a nilsequence $(\psi(n))$ and null sequence $(\epsilon(n))$ such that $a(p_n) = \psi(p_n) + \epsilon(n)$. The same conjecture is raised for Hardy sequence $(\lfloor n^c \rfloor)$ instead of $(p_n)$. 
    
    Our result not only gives an affirmative answer to Frantzikinakis' conjecture, but also is stronger in several senses. First, in the case of prime sequences, we work with polynomial correlations rather than linear correlations. Also we do not need the system to be ergodic. Moreover, instead of having different nilsequences when decomposing along $(p_n)$ and $(\lfloor n^c \rfloor)$, we show that there is a fixed nilsequence that works for both, and in fact for many others.
    
    Before presenting the formal statement, we have a definition.
    
    \begin{definition}
        \begin{enumerate}
            \item Let $(r_n)$ be a increasing sequence of integers. A bounded sequence $(a(n))$ is called a \emph{null sequence along $(r_n)$} if 
            \[
                \lim_{N \rightarrow \infty} \frac{1}{N} \sum_{n=1}^N |a(r_n)| = 0
            \]  
            
            \item If $r_n = n$, we simply call $(a(n))$ a \emph{null sequence}.
        \end{enumerate}
    \end{definition}
    
    \subsection*{Statement of results}
    
    The main goal of this paper is to prove:
    
    \begin{theorem}
        \label{theorem:1}
        \begin{itemize}
            \item The null component of a polynomial correlation is null along every sequence of the form $(Q(n))$ or $(Q(p_n))$ where $Q \in \mathbb{Z}[n]$ non-constant and $p_n$ is the $n^{th}$ prime.
            
            \item The null component of a linear correlation is also null along $(\lfloor n^c \rfloor)$ for $c > 0$.
        \end{itemize}
    \end{theorem}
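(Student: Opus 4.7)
Write $a(n) = \psi(n) + \epsilon(n)$ by the Bergelson-Host-Kra/Leibman decomposition; the task is to show that $\epsilon$ is null along each of the three families of subsequences. The plan is to avoid treating $\epsilon$ as a black box and instead open up its construction. Inspection of the proof of the decomposition theorem shows that for every $\eta > 0$ one can write $\epsilon = \epsilon_\eta + r_\eta$ with $\|r_\eta\|_\infty < \eta$ and $\epsilon_\eta$ a finite sum of ``building-block'' correlations $c_\ell(n) = \int_X \prod_j T^{s_j(n)} g_j^{(\ell)}\,d\mu$ in each of which at least one $g_j^{(\ell)}$ lies in the orthogonal complement of a Host-Kra-Leibman factor $\mathcal{Z}$ of some finite step. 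Choosing the step of $\mathcal{Z}$ large enough at the outset, we may assume $\mathcal{Z}$ is above the Leibman threshold for every polynomial system that arises in the arguments below; it then suffices to prove that each $c_\ell$ is null along $(Q(n))$, $(Q(p_n))$, and (for linear $s_j$) $(\lfloor n^c \rfloor)$.

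For $(Q(n))$ this is almost automatic: $c_\ell(Q(n))$ is itself a polynomial correlation, with polynomials $s_j \circ Q$, and one of the $g_j^{(\ell)}$ is orthogonal to the relevant HKL factor by choice of $\mathcal{Z}$, so Leibman's null criterion yields $\frac{1}{N}\sum_{n\le N}|c_\ell(Q(n))|^2 \to 0$ and Cauchy-Schwarz concludes.

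For $(Q(p_n))$, the core of the paper, I would prove a prime analog of Leibman's null criterion: for such a building block $c$, $\frac{1}{N}\sum_{n\le N}|c(Q(p_n))|^2 \to 0$. The route is a PET / van der Corput induction modeled on Bergelson-Leibman, but executed along primes at each stage, so that the reduction terminates in an equidistribution statement for polynomial-of-prime orbits $g(Q(p_n))\Gamma$ on nilmanifolds. That equidistribution is then supplied by the Green-Tao prime orbit theorem on nilmanifolds, together with its polynomial extension due to Frantzikinakis-Host-Kra, after a standard $W$-trick to distribute the primes evenly across residue classes. This is precisely the lemma advertised in the abstract as also yielding the pointwise ergodic theorem along $(Q(p_n))$ in nilsystems.

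For $(\lfloor n^c \rfloor)$ with linear $s_j$, the building block $c(n)$ is the Fourier coefficient of a measure $\sigma$ on $\mathbb{T}$ and orthogonality to the Kronecker factor forces $\sigma$ to be continuous, which reduces the claim via Wiener's lemma and dominated convergence to the classical equidistribution of $(\alpha \lfloor n^c \rfloor)$ modulo $1$ for $\alpha$ outside a countable set. The main obstacle is clearly the prime case: running the PET scheme along $(p_n)$ requires quantitative equidistribution of polynomial prime orbits on nilmanifolds that is uniform enough to survive repeated van der Corput iterations, and merging this with the outer substitution $n \mapsto Q(n)$ and the $W$-trick in a single coherent induction is where essentially all of the technical novelty will have to lie.
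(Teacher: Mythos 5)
Your outline misidentifies what the null component actually consists of, and the part you leave out is where the paper's work lies. The decomposition does not reduce $\epsilon$, up to a uniformly small error, to a finite sum of building blocks in which some $g_j^{(\ell)}$ is orthogonal to a Host--Kra--Leibman factor. That accounts only for the difference $a - \tilde a$, where $\tilde a$ is the projection of the correlation onto the nilfactor. The projected sequence $\tilde a$ is itself not a nilsequence: after passing to a finite-step nilsystem and continuous functions, it has the form $\int_X F\bigl((\tau^{s_0(n)},\dots,\tau^{s_k(n)})\Delta(x)\bigr)\,d\mu(x)$, an \emph{integral of nilsequences}, and (especially in the non-ergodic case, where a further integral over the ergodic decomposition appears) such an integral has a genuinely nonzero null component of its own --- think of $\int_0^1 e^{2\pi i nt}\,dt$, which is an integral of $1$-step nilsequences with no function orthogonal to anything. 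Showing that \emph{this} null component is null along $(Q(n))$, $(Q(p_n))$ and $(\lfloor n^c\rfloor)$ is the main technical content of the paper (Proposition \ref{proposition:integral-of-nilsequences} and Lemma \ref{lemma:main-integral-combination}): one refines Leibman's second-moment argument on the relative square $X\times_{\hat X}X$, and the input needed from the subsequence is not plain Green--Tao equidistribution but the ``EGEN'' property --- equidistribution of $(\tau^{Q(p_{s \bmod d,\,n})}1_X)$ on the correct connected component after restricting to residue classes --- which itself takes work to extract from Green--Tao via the $W$-trick, Leibman's patching lemma, and Dirichlet/Siegel--Walfisz. Your proposal never touches this part, so even granting everything you sketch, the theorem does not follow.

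Two further points. For the building blocks you do treat, the paper does not re-run PET/van der Corput along the primes; it transfers the characteristic-factor property from $(s_j(Q(n)))$ to $(s_j(Q(p_n)))$ wholesale using the Frantzikinakis--Host--Kra comparison between weighted averages with the modified von Mangoldt weight and unweighted averages, which is much lighter than the induction you propose and avoids the uniformity issues you flag. And your argument for $(\lfloor n^c\rfloor)$ via Herglotz, Wiener and equidistribution of $(\alpha\lfloor n^c\rfloor)$ only applies to single correlations ($k=1$); the theorem's second bullet concerns general linear correlations, for which one needs Frantzikinakis's results that $(\lfloor n^c\rfloor)$ is characteristic-factor-good and equidistribution-good for multiple averages, not just spectral measures on $\mathbb{T}$.
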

    
\begin{remark*}
    By a different method, Tao and Ter{\"a}v{\"a}inen \cite{tao-teravainen17} prove the null component of a linear correlation is null along the primes, and use this result to prove odd cases of logarithmic Chowla's conjecture.
\end{remark*}
    
    In fact, we prove the null component is null along a more general category of sequences, namely \emph{good sequences}. A good sequence is the one that possesses two properties: \emph{Good for projection on nilfactors (GPN)} and \emph{essentially good for equidistribution on nilmanifolds (EGEN)} (see Section \ref{subsection:good-sequences} for definition). 
    
    To show the null component is null along good sequences, we follow a similar argument as Leibman \cite{leib15}. A key proposition in Leibman's proof says that an \emph{integral of nilsequences} has nil+null decomposition (see Section \ref{subsection:integral-of-nilsequences}). In Section \ref{section:integral-of-nilsequence}, we refine that result by showing that:
    \begin{proposition}
        \label{proposition:integral-of-nilsequences}
        The null component of an integral of nilsequences is null along any EGEN sequence.
    \end{proposition}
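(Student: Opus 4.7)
The plan is to revisit Leibman's existing argument for the nil+null decomposition of an integral of nilsequences, extract the null component as an explicit double-integral expression, and then use the EGEN hypothesis in place of ordinary averaging in his final step. Concretely, I would write an integral of nilsequences as $b(n) = \int_Y \psi_y(n)\, d\nu(y)$, where each $\psi_y(n) = F_y(g_y^n \cdot x_y)$ is a nilsequence. Leibman's construction embeds this family into a single ``universal'' nilmanifold $X = G/\Gamma$ and produces a nilsequence $\tilde{b}$ together with residuals $\eta_y := \psi_y - \tilde{\psi}_y$ such that $\epsilon(n) = \int_Y \eta_y(n)\, d\nu(y)$ is the null component of $b$. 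The crucial structural feature of his construction is that for $\nu \otimes \nu$-almost every pair $(y, y')$ the product $\eta_y(n)\overline{\eta_{y'}(n)}$ is a nilsequence on $X \times X$ whose pointwise Ces\`aro average along $n$ is zero; this is precisely the mechanism by which Leibman obtains nullity of $\epsilon$ along $n$ itself.

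Given this, I would bound
\[
\frac{1}{N}\sum_{n=1}^N |\epsilon(r_n)|^2 = \int_{Y \times Y} \frac{1}{N}\sum_{n=1}^N \eta_y(r_n)\overline{\eta_{y'}(r_n)}\, d\nu(y)\, d\nu(y'),
\]
which is valid by Fubini because the integrand is uniformly bounded. For each fixed pair $(y, y')$ in the full-measure set described above, the inner sum is a Ces\`aro average along $(r_n)$ of a mean-zero nilsequence on $X \times X$. The EGEN hypothesis on $(r_n)$ --- the requirement that $(r_n)$ preserves equidistribution on nilmanifolds, so that mean-zero nilsequences remain mean-zero on average along $(r_n)$ --- is exactly what forces this inner average to tend to zero. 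Dominated convergence then yields $\frac{1}{N}\sum_{n=1}^N |\epsilon(r_n)|^2 \to 0$, and Cauchy--Schwarz upgrades this $L^2$ statement to the $L^1$ null conclusion required by the definition.

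The main obstacle I anticipate is recovering the mean-zero property of $\eta_y \overline{\eta_{y'}}$ cleanly enough to match the EGEN hypothesis as formulated in Section~\ref{subsection:good-sequences}. Leibman proves the analogous assertion for the standard average $\tfrac{1}{N}\sum_{n=1}^N$, but the product nilsequences that arise depend on $(y, y')$ in a measurable but potentially irregular way, whereas EGEN is framed for a single nilsequence at a time. I expect to resolve this either by exploiting Leibman's universal-nilmanifold embedding so that all the relevant products live on one fixed nilmanifold $X \times X$ (making EGEN apply uniformly), or, failing that, by combining EGEN with a disintegration / measurable-selection argument that handles the $(y, y')$ dependence. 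Checking that this uniformity really follows from Leibman's construction, and that it threads cleanly through the dominated-convergence step, is where the bulk of the technical work will sit.
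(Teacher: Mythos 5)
Your overall skeleton (reduce via Leibman to a single nilmanifold, pass to the second moment, write $\frac{1}{N}\sum_{n\le N}|\epsilon(r_n)|^2$ as a double integral of products of nilsequences, and control the inner Ces\`aro average for almost every pair) is the same as the paper's. But there is a genuine gap in the step where you invoke EGEN: you describe EGEN as "the requirement that $(r_n)$ preserves equidistribution on nilmanifolds, so that mean-zero nilsequences remain mean-zero on average along $(r_n)$." That is (essentially) the GEN property, not EGEN, and it is false for the sequences this proposition must cover. An EGEN sequence such as $(n^2)$ does \emph{not} equidistribute orbits on a disconnected nilmanifold: on $\mathbb{T}\times\mathbb{Z}/3$ the orbit along $n^2$ misses an entire connected component, so a nilsequence with zero mean over the whole nilmanifold can have a nonzero Ces\`aro average along $(n^2)$. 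Since the relevant product system (in the paper, the relative product $X\times_{\hat X}X$ rather than $X\times X$) is in general disconnected, your inner average need not tend to zero by any "mean-zero" argument, and the dominated-convergence step has nothing to converge to.

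The missing idea is a decomposition into residue classes. Let $d$ be the number of connected components of the product nilmanifold. For each $s$ such that $\{n: r_n\equiv s\pmod d\}$ has positive upper density, EGEN tells you that the orbit along $(r_{s\pmod d,n})$ equidistributes on the \emph{same connected component} as the orbit along the arithmetic progression $(dn+s)$; hence for almost every pair the two limits $H(a,b)$ and $F(a,b)$ agree (each equals the integral of $f\otimes\bar f$ over that component, which is generally nonzero). Integrating, $\lim_N\mathbb{E}_{n\in[N]}|\varphi(r_{s\pmod d,n})|^2 = \lim_N\mathbb{E}_{n\in[N]}|\varphi(dn+s)|^2$, and the right-hand side vanishes only because $\varphi$ is already known (from Leibman) to be null along $\mathbb{N}$ and $(dn+s)$ has positive density. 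Nullity along $(r_n)$ then follows by combining the residue classes. So the conclusion is reached by comparison with progression averages, not by a mean-zero cancellation; without this mod-$d$ comparison your argument does not go through for EGEN sequences. Your secondary worry about the measurable dependence on $(y,y')$ is, by contrast, not the real obstacle: after Leibman's reduction everything lives on one fixed nilmanifold and the a.e.\ equidistribution statement is supplied by his Lemma 4.6.
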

    
    \begin{comment}
        With \ref{proposition:integral-of-nilsequences} in hand, to show null component of multiple polynomial correlation is null along essentially good sequences is almost immediate. The detail is carried in Section \ref{section:null-along-good-sequence}.
    \end{comment}
\begin{comment}    
    Leibman's proposition uses the fact that orbits of ergodic nilrotations along integers are equidistributed on nilmanifolds. It turns out that we do not need full equidistribution for this. A weaker hypothesis is sufficient, however, to accommodate weaker hypothesis, we need to be more careful in our analysis.
    
    However, when dealing with EGEN sequences in general, we do not have this property. Hence careful analysis here when passing to subsequences is needed when we prove our proposition.
\end{comment} 

    The fact that $(Q(n))$ is a good sequence follows from the works of Host-Kra \cite{hk05-2}, and Leibman \cite{leib05-3, leib05}. On the other hand, Hardy sequence $(\lfloor n^c \rfloor)$ is proved to be good by Frantzikinakis \cite{frant09, frant10}. In Section \ref{section:polynomial-of-primes-is-good}, we show:
    
    \begin{proposition}
        \label{proposition:poly-prime-good}
        For any $Q \in \mathbb{Z}[n]$ non-constant, the sequence $(Q(p_n))$ is good.
    \end{proposition}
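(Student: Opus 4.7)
The plan is to verify the two defining conditions of a good sequence for $(Q(p_n))$: the GPN condition (good for projection on nilfactors) and the EGEN condition (essentially good for equidistribution on nilmanifolds). Both properties reduce, in essence, to transferring equidistribution statements for polynomial orbits on nilmanifolds from integer sampling to prime sampling, and the overall strategy is to exploit the Green--Tao--Ziegler decomposition of the von Mangoldt function $\Lambda$ as a ``structured plus pseudorandom'' sum.

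For EGEN, the key input is that for any nilrotation $g$ on a nilmanifold $G/\Gamma$ and any starting point $x$, the orbit $(g^{Q(p_n)} \cdot x)_{n \geq 1}$ equidistributes on the closure of $(g^{Q(n)} \cdot x)_{n \geq 1}$. Green and Tao's equidistribution theorem for primes on nilmanifolds handles the linear case $Q(n)=n$. The polynomial case follows by the standard lifting trick that realizes the polynomial orbit $(g^{Q(n)})$ as a linear orbit on a larger nilmanifold built from $G$ and the coefficients of $Q$, as in Leibman \cite{leib05}, and then applying the linear equidistribution theorem on that larger space. Passing to ergodic components removes any hidden ergodicity assumption and yields the EGEN statement for $(Q(p_n))$.

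The GPN property is the main substance. Here one must show that if $f$ is orthogonal to a suitably high Host--Kra nilfactor, then $\tfrac{1}{N}\sum_{n \leq N} f(T^{Q(p_n)} x)$ tends to zero in $L^2(\mu)$. The approach is to rewrite this average, up to a logarithmic-density factor controlled by the prime number theorem, as the weighted average $\tfrac{1}{N} \sum_{n \leq N} \Lambda(n)\, f(T^{Q(n)} x)$, apply the W-trick, and split $\Lambda$ via Green--Tao--Ziegler into a nilsequence part and a Gowers-uniform remainder. The nilsequence part is absorbed into the GPN property for $(Q(n))$ already available from Host--Kra \cite{hk05-2} and Leibman \cite{leib05-3, leib05}. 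For the uniform remainder, the inverse theorem for the Gowers norms dominates the correlation with $f \circ T^{Q(n)}$ by a Host--Kra seminorm of $f$, which vanishes by the orthogonality hypothesis.

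The main obstacle is matching the order of Gowers norm needed to annihilate the uniform part of $\Lambda$ with the step of the nilfactor on which GPN is being asserted; the requisite step grows with $\deg Q$, and this dependence must be tracked through a PET-style induction parallel to Leibman's induction in the integer case. An alternative route, which I would pursue if the direct induction becomes cumbersome, is to invoke the same lifting trick used in the EGEN argument to reduce the problem to a linear sampling question on a higher-dimensional nilmanifold, where only the familiar $U^s$-norm controls of Green--Tao--Ziegler are required, and then descend.
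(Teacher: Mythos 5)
Your overall toolbox (the $W$-trick, Green--Tao's equidistribution of primes on nilmanifolds, and Leibman's lifting of polynomial orbits to linear ones) is the same as the paper's, but there are two genuine gaps. First, your stated key input for EGEN --- that $(g^{Q(p_n)}x)$ equidistributes on the closure of $(g^{Q(n)}x)$ --- is false. Take $X=\mathbb{Z}/3$, $g=\bar 1$, $Q(n)=n$: the integer orbit closure is all of $\mathbb{Z}/3$, but the primes occupy only the classes $1,2 \pmod 3$, so the prime orbit equidistributes on two of the three points. This failure is precisely why the paper works with EGEN rather than GEN: one must restrict to primes in a fixed residue class $r \pmod d$ and identify the single connected component $\tau^{Q(r)}X_0$ on which the restricted orbit equidistributes. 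Carrying this out requires the $W$-trick at modulus $dW$ (Corollary \ref{corollary:green-tao-d}), a Dirichlet/Siegel--Walfisz comparison between the unweighted average over $\{q: dq+r\in\mathbb{P}\}$ and the $\Lambda'$-weighted average (Lemma \ref{lem1}), and a reduction to the totally ergodic system $(\tau^{Q(r)}X_0,\tau^d)$ with the polynomial $(Q(dn+r)-Q(r))/d$. None of this component bookkeeping appears in your outline, and it is the substance of the EGEN proof.

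Second, for GPN you correctly identify the route (decompose $\Lambda$, absorb the structured part, kill the uniform part by orthogonality to a high nilfactor), but you leave the central difficulty --- matching the Gowers-norm order to the nilfactor step through a PET-type induction for the polynomial family $(s_1(Q(Wn+r)),\dots,s_k(Q(Wn+r)))$ --- as an acknowledged obstacle rather than resolving it. The paper sidesteps this by quoting the Frantzikinakis--Host--Kra comparison proposition, which already gives $\bigl\Vert \mathbb{E}_{n\in[N]}\Lambda'_{W,r}(n)a(Wn+r) - \mathbb{E}_{n\in[N]}a(Wn+r)\bigr\Vert_{L^2} = o(1)$ uniformly in $r$, combined with the fact (Host--Kra, Leibman) that the characteristic nilfactor for an integer polynomial family depends only on the degrees, hence is the same for all $W,r$; one then lets $\omega\to\infty$ after the existence of the limit along primes (Wooley--Ziegler, Frantzikinakis--Host--Kra) is known. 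Note also that GPN is a statement about the multiple average $\prod_{j}T^{s_j(Q(p_n))}f_j$, not the single average $\mathbb{E}_{n}f(T^{Q(p_n)}x)$ that you write down; your sketch must be phrased multilinearly for the orthogonality argument to apply to the correct object.
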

    
    The EGEN property of polynomials of primes allows us to determine the exact formula for pointwise ergodic average along polynomials of primes for continuous functions in a \emph{nilsystem} (see Section \ref{subsec:nilseq} for definition). Green and Tao \cite{gt12} prove the average converges to the integral of the function in case of a totally ergodic nilsystem. Eisner \cite{eis} shows that the average converges everywhere in an arbitrary nilsystem. But the exact formula is still missing in this general situation. As a corollary of the EGEN property of $(Q(p_n))$, we can determine the exact average. 
    
    To be precise, for an ergodic nilsystem $(X = G/\Gamma, \mu, \tau)$, let $\pi: G \to X$ be the canonical map $\pi(g) = g \Gamma$. Assume X has $d$ connected components, and $X_0$ is the component containing $1_X = \pi(1_G)$. Let $X_j = \tau^j X_0$ for $j \in \mathbb{Z}$ and $\mu_{X_j}$ be the Haar measure of $X_j$. Note that $X_i = X_j$ if $i \equiv j \pmod d$ (See Section \ref{subsubsec:connected-nilorbits}). Let $\phi$ be the Euler function. Then we have:
    \begin{corollary}
    \label{corolarry:average-along-prime-formula}
        Let $(X, \mu, \tau)$ be an ergodic nilsystem with $d$ connected components $X_0, X_1, \ldots, X_{d-1}$ with $X_i = \tau^i X_0$ and $f$ be a continuous function on $X$. Suppose $x \in X_k$ for some $0 \leq k \leq d-1$. Then
        \[
            \lim_{N \to \infty} \frac{1}{N} \sum_{n=1}^N f(\tau^{Q(p_n)} x) = \frac{1}{\phi(d)} \sum_{\substack{1 \leq s < d \\ (s,d) = 1}} \int_{X_{Q(s) + k}} f \, d \mu_{X_{Q(s) +k}}        
        \]
    \end{corollary}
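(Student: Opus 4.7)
The plan is to deduce the corollary from the EGEN property of $(Q(p_n))$ given by Proposition~\ref{proposition:poly-prime-good}, combined with the cyclic structure of an ergodic nilsystem on its connected components. The basic geometric input is that $\tau$ permutes $X_0, \ldots, X_{d-1}$ cyclically, so $\tau^d$ preserves each $X_j$ and acts on it ergodically with respect to the Haar measure $\mu_{X_j}$; in particular, for $x \in X_k$ and any integer $m$, the point $\tau^m x$ lies in $X_{k + m \bmod d}$.

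First I would split the Cesaro average according to the residue class of $p_n$ modulo $d$. Since $\gcd(p_n, d) = 1$ for all but finitely many $n$, only reduced residue classes contribute in the limit; by the prime number theorem in arithmetic progressions, for each $s$ coprime to $d$ the set $A_s = \{n : p_n \equiv s \pmod{d}\}$ has density $1/\phi(d)$ in $\mathbb{N}$. On $A_s$ we have $Q(p_n) \equiv Q(s) \pmod{d}$ since $Q \in \mathbb{Z}[n]$, so $\tau^{Q(p_n)} x$ lies entirely in the single connected component $X_{Q(s) + k}$. Setting $y_s := \tau^{Q(s)} x$ and $t_n := (Q(p_n) - Q(s))/d \in \mathbb{Z}$ for $n \in A_s$, we can rewrite $\tau^{Q(p_n)} x = (\tau^d)^{t_n} y_s$, which moves us into the connected ergodic nilsystem $(X_{Q(s)+k}, \mu_{X_{Q(s)+k}}, \tau^d)$. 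The sub-average along $A_s$ should then converge to $\int_{X_{Q(s)+k}} f\, d\mu_{X_{Q(s)+k}}$, since the unique $\tau^d$-invariant measure on $X_{Q(s)+k}$ is Haar; this is precisely the equidistribution that the EGEN property of $(Q(p_n))$ provides when applied to the restricted system. Summing over $s \in (\mathbb{Z}/d\mathbb{Z})^\ast$, weighted by the density $1/\phi(d)$ of each $A_s$, then produces the claimed formula.

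The hard part will be reconciling the single-sequence EGEN statement for $(Q(p_n))$ acting on the full system $(X, \mu, \tau)$ with the residue-by-residue decomposition used above. Either one reads the explicit limit off of EGEN directly---since the breakup over reduced residue classes modulo the period $d$ is exactly the expected obstruction to equidistribution on a non-totally-ergodic nilsystem---or one verifies that the sub-sequence of $(Q(p_n))$ indexed by $n \in A_s$ remains good for $\tau^d$ on the connected component $X_{Q(s)+k}$, which lets us invoke the totally-ergodic form of equidistribution on each piece. I expect the first route to be cleaner, as Proposition~\ref{proposition:poly-prime-good} likely already packages the arithmetic-progression obstruction into the limit formula.
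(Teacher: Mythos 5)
Your proposal is correct and follows essentially the same route as the paper: split the average over the reduced residue classes of $p_n$ modulo $d$, apply the EGEN property of $(Q(p_n))$ (which in the paper is exactly the statement that $(\tau^{Q(p_{s \pmod d,n})}x)$ equidistributes on $X_{Q(s)+k}$, proved via the reduction to the totally ergodic system $(X_{Q(s)+k},\tau^d)$ that you describe), and recombine using the density $1/\phi(d)$ of each class. The ``hard part'' you flag is already resolved by the paper's Proposition \ref{proposition:poly-prime-EGEN}, whose proof is precisely your second route.
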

    
\begin{comment}    
    It is easy to see there is an increasing sequence $(r_n)$ and a correlation $(a(n))$ whose null component is not null along $(r_n)$. Indeed, if the null component $(\epsilon(n))$ of $(a(n))$ is null along any increasing sequence $(r_n)$, then $\epsilon(n) \to 0$ as $n \to \infty$. Therefore, if $a(n)$ is a single correlation arising from a weakly mixing but not strongly mixing system, then there is a sequence $(r_n)$ where the null component is not null along $(r_n)$. However, an explicit example of such $(r_n)$ is still desirable. 
\end{comment}

    In the same spirit of Theorem \ref{theorem:1}, but in the opposite direction, we are also interested in those sequences $(r_n)$ such that there exists a correlation whose null component is not null along $(r_n)$. It turns out there is a well known class of sequences satisfying such condition, namely \emph{rigid sequences}. A sequence is called \emph{rigid} if there is a weakly mixing system $(X, \mu, T)$ such that $\lVert T^{r_n} f - f \rVert_{L^2(\mu)} \to 0$ for all $f \in L^2(\mu)$. Examples of rigid sequences include $(2^n)$, $(3^n)$ and $(n!)$ (See Section \ref{subsection:backgroun-rigidity-sequences} for more details). In Section \ref{section:not-null-along-3^n}, we prove:    
    \begin{proposition}
        \label{proposition:3^n}
        For any rigid sequence $(r_n)$, there exists a linear correlation whose null component is not null along $(r_n)$.
    \end{proposition}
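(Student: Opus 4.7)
The plan is to exploit the defining properties of rigidity directly to produce an explicit single linear correlation. Pick a weakly mixing system $(X, \mu, T)$ for which $(r_n)$ is a rigidity sequence, which exists by the definition of rigid sequence, and choose any $f \in L^\infty(\mu)$ with $\int f \, d\mu = 0$ and $\|f\|_{L^2(\mu)} > 0$. Consider
\[
    a(n) = \int_X f(x) \cdot f(T^n x) \, d\mu(x),
\]
which is a linear correlation in the sense of \eqref{equation:definition-of-correlation} with $k = 1$, $s_0(n) = 0$, $s_1(n) = n$, and $f_0 = f_1 = f$.

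The first step is to verify that $(a(n))$ is itself a null sequence. Since $T$ is weakly mixing and $\int f \, d\mu = 0$, the equivalent characterization of weak mixing gives $\frac{1}{N} \sum_{n=1}^N |a(n)|^2 \to 0$, and Cauchy--Schwarz yields $\frac{1}{N} \sum_{n=1}^N |a(n)| \to 0$. The trivial decomposition $a(n) = 0 + a(n)$ is therefore a nil+null decomposition, and the uniqueness recorded in Section \ref{subsection:backgroun-uniqueness-of-nil-null-decomposition} identifies the null component of $(a(n))$ as $(a(n))$ itself.

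The second step is to show $(a(n))$ is not null along $(r_n)$. Writing
\[
    a(r_n) = \langle f, T^{r_n} f \rangle_{L^2(\mu)} = \|f\|_{L^2(\mu)}^2 + \langle f, \, T^{r_n} f - f \rangle_{L^2(\mu)},
\]
the rigidity hypothesis $\|T^{r_n} f - f\|_{L^2(\mu)} \to 0$ combined with Cauchy--Schwarz forces the correction term to vanish, so $a(r_n) \to \|f\|_{L^2(\mu)}^2 > 0$, giving $\frac{1}{N} \sum_{n=1}^N |a(r_n)| \to \|f\|_{L^2(\mu)}^2 > 0$. There is no substantive obstacle: the argument simply plays rigidity (non-decay along $(r_n)$) against weak mixing (decay along $n$), with the uniqueness of the nil+null decomposition doing the bookkeeping required to conclude that $(a(n))$ is exactly the null component of the correlation we constructed.
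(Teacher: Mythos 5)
Your argument is correct, and it is genuinely more direct than the one in the paper. You take the weakly mixing system $(X,\mu,T)$ supplied by the definition of rigidity and a bounded real-valued $f$ with $\int f\,d\mu=0$, $\lVert f\rVert_{L^2}>0$ (such an $f$ exists since the system is nontrivial; do take $f$ real, or insert conjugates consistently, so that $a(r_n)=\langle T^{r_n}f,f\rangle$); weak mixing makes $(a(n))$ itself the null component, and rigidity applied to that same $f$ gives $a(r_n)\to\lVert f\rVert_{L^2}^2>0$. The paper instead routes through the spectral characterization of rigidity: it takes a continuous measure $\sigma$ on $\mathbb{T}$ with $\hat{\sigma}(r_n)\to 1$, realizes $\hat{\sigma}(n)=\int g\,T^n\bar{g}\,d\mu$ in a Gaussian system where $g$ is unbounded, and then must approximate $g$ by a bounded $f$ and control the discrete part of $\sigma_f$ via Wiener's lemma (Proposition \ref{proposition:not-null-stronger}). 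Your approach sidesteps that entire approximation step because the bounded function is available from the start. What the paper's detour buys is generality: Proposition \ref{proposition:not-null-stronger} applies to any sequence along which \emph{some} continuous measure has non-null Fourier coefficients in density --- e.g.\ the $(2^n3^m)$ example of Badea--Grivaux mentioned in the remark --- whereas your argument uses the full strength of rigidity ($\lVert T^{r_n}f-f\rVert_{L^2}\to 0$ for the specific bounded $f$ you chose). For Proposition \ref{proposition:3^n} as stated, your shorter proof is complete.
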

    
    \subsection*{ Application}
    
    The goal of Bergelson, Host and Kra's paper \cite{Bergelson_Host_Kra05} is not to prove the nil+null decomposition for a multiple correlation. They use the decomposition to prove a generalization of Khintchine's Theorem. In a similar fashion, it follows from Theorem \ref{theorem:1} and Corollary \ref{corolarry:average-along-prime-formula} that in an ergodic system $(X, \mu, T)$, for any measurable set $A \subseteq X$ and $\delta > 0$, the set
    \[
        \{n \in \mathbb{N}: \mu(A \cap T^{-(p_n-1)} A \cap T^{-2(p_n -1)} A) \geq \mu(A)^3 - \delta\}
    \]
    has positive density. The same is true for the set
    \[
        \{n \in \mathbb{N}: \mu(A \cap T^{-(p_n -1)} A \cap T^{-2(p_n -1)} A \cap T^{-3(p_n -1)} A) \geq \mu(A)^4 - \delta\}.
    \]
    A detail proof will appear in a forthcoming paper \cite{Donoso-Le-Moreira-Sun-2018}.
    
    \subsection*{Open question}
    It is still open that whether a similar result to nil+null decomposition exists for a set of commuting transformations. To be precise, for a measure space $(X, \mu)$ with commuting measure preserving transformations $T_j \colon X \to X$ and $f_j \in L^{\infty}(\mu)$ for $0 \leq j \leq k$, we define a correlation sequence
    \[
        a(n) = \int_X f_0(T_0^{n} x) f_1 (T_1^{n} x) \ldots f_k(T_k^{n} x) \, d \mu(x).
    \]
    Frantzikinakis \cite{frant15} shows that for any $\delta > 0$ the sequence $(a(n))$ can be decomposed as $a(n) = a_{st}(n) + a_{er}(n)$ where $(a_{st}(n))$ is a $k$-step nilsequence and 
    \[
        \lim_{N - M \rightarrow \infty} \frac{1}{N-M}  \sum_{n = M}^{N-1} |a_{er}(n)|^2 < \delta.
    \]
    From Frantzikinakis' result, it is natural to ask whether we have the same decomposition, but in addition 
    \[
        \lim_{N \rightarrow \infty} \frac{1}{N} \sum_{n = 1}^N |a_{er}(p_n)|^2 < \delta?
    \]
    What if we replaced $(p_n)$ by  Hardy sequence $(\lfloor n^c \rfloor)$? Our argument in this paper does not apply since we do not have sufficient information about the factors that control the multiple ergodic averages for commuting transformations \cite{austin15-1, austin15-2}. These factors are not simply inverse limits of nilsystems, the objects that play a crucial role in our analysis.
    
    \subsection*{Outline of the paper} 
    Section \ref{sec:background} is for background and notation. In Section \ref{section:integral-of-nilsequence}, we prove the Proposition \ref{proposition:integral-of-nilsequences} about the integral of nilsequences. In Section \ref{section:strong-nil-null-decomposition}, we proceed to prove the null component of a correlation is null along good sequences. Section \ref{section:polynomial-of-primes-is-good} is to show the polynomials of primes are good sequences, hence effectively prove Theorem \ref{theorem:1}. Also in this section, we prove the limit formula of the average along polynomials of primes (Corollary \ref{corolarry:average-along-prime-formula}). In the last section, we construct an example of correlations whose null component is not null along a given rigid sequence. 
    
    \subsection*{Acknowledgment}
    I would like to thank B. Kra, N. Frantzikinakis and J. Moreira for many valuable advices. I also would like to thank A. Leibman for answering my questions about his papers and the anonymous referee for suggestions that vastly improve the readability of this paper.
    
\section{Background and notation}
\label{sec:background}

    \subsection{Notation}
        A sequence is a function $a \colon \mathbb{N} \rightarrow \mathbb{C}$. We denote this sequence by $(a(n))_{n \in \mathbb{N}}$, $(a(n))$ or sometimes only $a$ if there is no danger of confusion.
        
        For $N \in \mathbb{N}$, we write $[N] = \{1, 2, \ldots, N\}$. For a function $f$ on a finite non-empty set $S$, let $\mathbb{E}_{s \in S} f(s)$ denote $\frac{1}{|S|} \sum_{s \in S} f(s)$. In particular, for bounded sequence $(a(n))$,  
        \[
            \mathbb{E}_{n \in [N]} a(n) := \frac{1}{N} \sum_{n = 1}^N a(n) 
        \]
        
        Let $(X, \mu, T)$ be a measure preserving system and $f \in L^{\infty}(X)$. $Tf$ is defined to be $Tf(x) := f(Tx)$ for all $x \in X$. If $(Y, \nu, S)$ is a factor of $(X, \mu, T)$, we denote \emph{conditional expectation of $f$ on $Y$} by $\mathbb{E}(f|Y)$.
        
        Let $\mathbb{P}$ denote the set of all primes and $p_n$ the $n^{th}$ prime. For $d, s \in \mathbb{Z}$, let $p_{s \pmod d, n}$ be the $n^{th}$ prime that is congruent to $s \pmod d$. 
        
    \subsection{Nilmanifolds, nilsystems and nilsequences}
    \label{subsec:nilseq}
        Let $G$ be a $k$-step nilpotent Lie group and $\Gamma$ be a \emph{uniform} (i.e closed and cocompact) subgroup of $G$. The compact homogeneous space $X := G/\Gamma$ is called a \emph{$k$-step nilmanifold}. Let $\pi \colon G \rightarrow X$ be the standard quotient map. We write $1_X = \pi(1_G)$ where $1_G$ is the identity element of $G$. Suppose $G^0$ is the identity connected component of $G$. If $X$ is connected, then $X = \pi(G^0) = G^0/(G^0 \cap \Gamma$). 
        
        The space $X$ is endowed with a unique probability measure that is invariant under the translations by $G$. This measure is called the \emph{Haar measure} for $X$, and denoted by $\mu_X$. For every $\tau \in G$, the measure preserving system $(X, \mu_X, \tau)$ is called \emph{$k$-step nilsystem}.
        
        Let $C(X)$ denote the set of continuous functions on $X$. For $f \in C(X)$ and $x \in X$, the sequence $\psi(n) := f(\tau^n x)$ is called a \emph{basic $k$-step nilsequence}. A \emph{$k$-step nilsequence} is a uniform limit of basic $k$-step nilsequences.
        
        If $G$ is not connected, we can embed $X$ in $X' = G'/\Gamma'$ where $G'$ is a connected and simply-connected $k$-step nilpotent Lie group and $\Gamma'$ is a closed, discrete cocompact subgroup of $G'$. Extending $f$ to a continuous function $f'$ on $X'$ and suppose $\tau' \in G'$ and $x' \in X'$ are elements corresponding to $\tau \in G$ and $x \in X$, we have a different representation of basic $k$-step nilsequence $\psi(n) =  f'( \tau'^n x' )$ for all $n \in \mathbb{Z}$. Therefore, if the basic $k$-step nilsequence $(f(\tau^n x))_{n \in \mathbb{Z}}$ is our interest, without the loss of generality, we can assume $G$ is connected and simply connected. 
        
\begin{comment}
        This observation is needed for Corollary \ref{cor:green-tao1}.
        We note for later use:
        
        \begin{proposition}[Leibman {\cite[Lemma 2.4]{leib10}}]
        \label{proposition:patching-nilsequences}
            Fix $d \in \mathbb{N}$ and $r \in \{0, 1, \ldots, d-1\}$. Let $\psi_j$ $(0 \leq j \leq d-1)$ be basic $k$-step nilsequences. Then there exists a basic $k$-step nilsquence $\psi$ such that $\psi(dn + r) = \psi_r(n)$ for all $0 \leq r \leq d-1$ and $n \in \mathbb{N}$. 
        \end{proposition} 
\end{comment}

        \begin{remark*}
            Different authors may have different notion of nilsequences. We use the original definition by Bergelson, Host and Kra \cite{Bergelson_Host_Kra05}. Leibman in his series of papers \cite{leib10, leib15} uses the same definition. However, in Green, Tao \cite{gt10, gt12} and Frantzikinakis \cite{frant15}, the nilsequences are in fact our basic nilsequences. Frantzikinakis \cite{frant17} even introduces the notion of \emph{basic generalized $k$-step nilsequences}. They are sequences of the form $(f(\tau^n x))_{n \in \mathbb{N}}$ when $f$ is allowed to be Riemann integrable. We do not use this notion in current paper.
        \end{remark*}
        
    \subsection{Subnilmanifolds}
        Let $X = G/\Gamma$ be a $k$-step nilmanifold. A \emph{subnilmanifold} $Y$ of $X$ is a closed subset of $X$ of the form $Y = Hx$ where $H$ is a closed subgroup of $G$ and $x \in X$. The Haar measure on $Y$ is denoted by $\mu_Y$. This measure is invariant under translation by any $\tau \in G$. 
        
        A \emph{normal subnilmanifold} $Z$ of $X$ is a subnilmanifold which is equal to $Lx$ for some normal closed subgroup $L$ of $G$ and $x \in X$. The quotient nilmanifold $X/Z := G/(L \Gamma)$ is a factor of $X$ by standard factor map $G/\Gamma \rightarrow G/(L \Gamma)$. For a subnilmanifold $Y$ of $X$, the \emph{normal closure} of $Y$ in $X$ is the smallest normal subnilmanifold of $X$ that contains $Y$. The normal closure of a connected subnilmanifold is connected \cite[page 5]{leib15}.
        
        For $\tau \in G$, we say the sequence $(\tau^n Y)_{n \in \mathbb{N}}$ is \emph{equidistributed} on $X$ if for any $f \in C(X)$, 
        \[
            \lim_{N \rightarrow \infty} \mathbb{E}_{n \in [N]} \int_Y \tau^n f \, d\mu_Y = \int_X f \, d \mu_X.
        \]
        
    \subsection{Orbit closures of subnilmanifolds}
        In this section we summarize important facts about orbit closures of subnilmanifolds under linear and polynomial translations. 
        
        \subsubsection{Linear orbits}
        \label{subsubsec:connected-nilorbits} 
        Let $Y$ be a connected subnilmanifold of nilmanifold $X = G/\Gamma$ and $\tau \in G$. Then the orbit closure of $Y$ under action of $\tau$ is a subnilmanifold of $X$, namely $\overline{\{\tau^n Y\}}_{n \in \mathbb{N}}$ and is denoted by $\mathcal{O}_Y$. Suppose $d$ is the number of connected components of $\mathcal{O}_Y$ and $Y^0$ is the component containing $Y$. Then all connected components of $\mathcal{O}_Y$ are $Y^0, \tau Y^0, \tau^2 Y^0, \ldots, \tau^{d-1} Y^0$. Moreover $\tau^{dn+r} Y^0 = \tau^r Y^0$ for $n \in \mathbb{N}, r \in \mathbb{Z}$ and the sequence $(\tau^{dn+r} Y)_{n \in \mathbb{N}}$ is equidistributed in $\tau^r Y^0$.
        
        In particular, suppose $(X, \mu, \tau)$ is an ergodic nilsystem with $d$ connected components. Assume $X_0$ is the component containing $1_X = \pi(1_G)$. Then all components of $X$ are $X_0, \tau X_0, \ldots, \tau^{d-1} X_0$. And $(\tau^{dn+r} 1_X)_{n \in \mathbb{N}}$ is equidistributed on $\tau^r X_0$. For details and proofs, see \cite{leib05}.
        
        \subsubsection{Polynomial orbits} A nilsystem $(X, \mu, \tau)$ is totally ergodic if and only if $X$ is connected \cite[Proposition 2.1]{Frantzikinakis08}. In this case, for any $Q(n) \in \mathbb{Z}[n]$ non-constant, and $x \in X$, the sequence $(\tau^{Q(n)} x)$ is equidistributed on $X$. A stronger result is obtained in \cite[Lemma 6.7]{frant10} 
        
    \subsection{Uniqueness of nil+null decomposition}
    \label{subsection:backgroun-uniqueness-of-nil-null-decomposition}
    
    If a sequence $(a(n))$ have two nil+null decompositions $a = \psi_1 + \epsilon_1 = \psi_2 + \epsilon_2$ where $\psi_1, \psi_2$ are nilsequences and $\epsilon_1, \epsilon_2$ are null sequences. Then $\psi_1 - \psi_2 = \epsilon_2 - \epsilon_1$. 
    
    $\psi_1 - \psi_2$ is a nilsequence and $\epsilon_2 - \epsilon_1$ is a null sequence. A nilsequence returns to any neighborhood of its supremum in a bounded gap set (due to minimality of an ergodic nilsystem). Hence it is a null sequence only when the supremum is $0$. Thus in our case, $\psi_1 - \psi_2 = \epsilon_2 - \epsilon_1 = 0$.
    
    \subsection{Integral of nilsequences}
    \label{subsection:integral-of-nilsequences}
        Let $(\Omega, \rho)$ be a measure space. Suppose for each $\omega \in \Omega$, there is a nilsequence $(\psi_{\omega}(n))_{n \in \mathbb{Z}}$. We say the family of nilsequences $\{\psi_{\omega}: \omega \in \Omega\}$ is \emph{integrable with respect to $\rho$} if for each $n \in \mathbb{Z}$, the function $\omega \mapsto \psi_{\omega}(n)$ is integrable with respect to $\rho$. In this case, the sequence $a(n) = \int_{\Omega} \psi_{\omega}(n) \, d \rho(\omega)$ is called \emph{an integral of nilsequences}. Leibman \cite[Proposition 4.2]{leib15} proves an integral of nilsequences admits a nil+null decomposition.
        
    \subsection{Nilfactors}
        Let $(X, \mu, T)$ be an ergodic measure preserving system. Suppose $(s_j(n))_{n \in \mathbb{N}}$ is integer valued sequence for $1 \leq j \leq k$. A factor $(Y, \nu, S)$ of $\textbf{X}$ is said to be \emph{characteristic for $(s_1(n), \ldots, s_k(n))$} if for any bounded functions $f_1, ..., f_k$ on $X$, we have 
        \[
            \lim_{N \rightarrow \infty} \left( \mathbb{E}_{n \in [N]} \prod_{j=1}^k T^{s_j(n)} f_j - \mathbb{E}_{n \in [N]} \prod_{j=1}^k T^{s_j(n)} \mathbb{E}\left(f_j|Y \right) \right) = 0,
        \]
        where the limits are taken in $L^2(X, \mu)$. Host and Kra \cite{hk05} show that there exists a characteristic factor for $(n,2n, \ldots, kn)$ which is an inverse limit of $(k-1)$-step nilsystems. We call this factor the \emph{$(k-1)$-step nilfactor of $X$} and denote it by $\mathcal{Z}_{k-1}(X)$ (some time $\mathcal{Z}_{k-1}$ if there is no confusion).
        
        Host and Kra \cite{hk05-2} show for most families of integer polynomials $Q_j$, there exists a nilfactor $\mathcal{Z}_m$ that is characteristic for $(Q_1(n), \ldots, Q_k(n))$. Leibman \cite{leib05-3} later show that result is true for all families of integer polynomials.

\begin{comment}
        In an effort to find smallest characteristic factors for polynomial tuples, Frantzikinakis \cite{frant08} shows that the $\mathcal{Z}_{k-1}$ factor is characteristic for $(h_1 Q(n), \ldots, h_k Q(n))$ if $k \geq 2$ with $h_j \in \mathbb{Z}$ $(1 \leq j \leq k)$ and non-constant integer polynomial $Q(n)$. 
\end{comment}

        \subsection{Characteristic factor for integer polynomials of primes}
        \label{subsec:char-pol-prime}
        Frantzikinakis, Host and Kra \cite{fhk} prove that $\mathcal{Z}_1$ factor is characteristic for $2$-tuple $(p_n, 2 p_n)$ where $p_n$ is the $n^{th}$ prime. For $k \geq 3$, they show that $\mathcal{Z}_{k-1}$ is characteristic for $k$-tuple $(p_n, 2p_n, \ldots, k p_n)$ conditional upon results on Mobius function and inverse conjecture for the Gowers norms, which are now established by Green and Tao \cite{gt10} and Green, Tao, and Ziegler \cite{gtz} respectively. 
\begin{comment}
        For non-constant integer polynomial $Q(n)$, by Frantzikinakis' result in \cite{frant08}, for $k \geq 2$ we have $\mathcal{Z}_{k-1}$ is characteristic for $(h_1 Q(n), \ldots, h_k Q(n))$ where $h_j \in \mathbb{Z}$ $(0 \leq j \leq k)$. By the same proof as in \cite{fhk}, we can show that $\mathcal{Z}_{k-1}$ is characteristic for $(h_1 Q(p_n), \ldots, h_k Q(p_n))$.
\end{comment}

    \subsection{Relative products}
    Let $X_1$, $X_2$ and $Y$ be three sets. Suppose there are surjective maps $\delta_1: X_1 \rightarrow Y$ and $\delta_2: X_2 \rightarrow Y$. Then \emph{fiber product of $X_1$ and $X_2$} with respect to $Y$ is defined to be $\{(x_1, x_2) \in X_1 \times X_2: \delta_1(x_1) = \delta_2(x_2)\}$. We denote this product by $X_1 \times_Y X_2$.
    
    Suppose $(X_1, \mu_1, T_1)$ and $(X_2, \mu_2, T_2)$ are measure preserving systems. Let $(Y, \nu, S)$ be a common factor of $(X_1, \mu_1, T_1)$ and $(X_2, \mu_2, T_2)$. Then the \emph{relative product} of $X_1$ and $X_2$ with respect to $Y$ is the measure preserving system $(X_1 \times_Y X_2, \mu_1 \times_Y \mu_2, T_1 \times T_2)$ where: 
    \begin{enumerate}[label=(\roman*)]
        \item The space $X_1 \times_Y X_2$ is the fiber product of $X_1$ and $X_2$ with respect to $Y$
        
        \item The measure $\mu_1 \times_Y \mu_2$ is characterized by 
        \[
            \int_{X_1 \times_Y X_2} f_1 (x_1) \otimes f_2(x_2) \, d (\mu_1 \times_Y \mu_2) (x_1, x_2) = \int_Y \mathbb{E}(f_1|Y) \mathbb{E}(f_2|Y) \, d \nu
        \]
        for all $f_1 \in L^2(X_1)$ and $f_2 \in L^2(X_2)$.
    \end{enumerate} 
    By abusing of notation, let $X_1 \times_{Y} X_2$ denote the relative product of $X_1$ and $X_2$ with respect to $Y$. If $X_1$ and $X_2$ are nilsystems, and $Y$ is common nilsystem factor, then $X_1 \times_Y X_2$ is also a nilsystem.
    
    \subsection{Hardy sequences}
        Let $\mathcal{F}$ be the collection of a functions $f \colon \mathbb{R}_{>0} \to \mathbb{R}$. Define $\mathcal{B} = \mathcal{F}/\sim$ where $f \sim g$ if there exists constant $c > 0$ such that $f(x) = g(x)$ for all $x > c$. A \emph{Hardy field} is a subfield of the ring $(\mathcal{B}, +, \times)$ which is closed under differentiation. An example of Hardy fields is the set of functions that are combinations of addition, multiplication, exponential and logarithm on real variable $t$ and real constants. Let $\mathcal{H}$ be the union of all Hardy fields. 
        
        For $a, b \in \mathcal{H}$, we write $a(t) \succ b(t)$ if $\lim_{t \rightarrow \infty} b(t)/a(t) = 0$. We say a function $a(t)$ has polynomial growth if there exists a polynomial $p \in \mathbb{R}[t]$ such that $p(t) \succ a(t)$. We call the sequence $(\lfloor a(n) \rfloor)_{n \in \mathbb{N}}$ a \emph{Hardy sequence} where $a \in \mathcal{H} $ and $\lfloor .\rfloor$ indicates integral part.

        \begin{definition}
            Let $a \in \mathcal{H}$ have polynomial growth and satisfy $a(t) - cp(t) \succ \log t$ for every $c \in \mathbb{R}$ and $p \in \mathbb{Z}[t]$. Then the sequence $(\lfloor a(n) \rfloor)_{n \in \mathbb{N}}$ is called \emph{a Hardy sequence of polynomial growth and logarithmically away from every multiple of polynomial of integer coefficients}.
        \end{definition}
        
        Examples of sequences that satisfy previous definition are $(\lfloor n^{c} \rfloor)_{n \in \mathbb{N}}$ where $c > 0$, $c \not \in \mathbb{Z}$, $(\lfloor n \log n \rfloor)_{n \in \mathbb{N}}$, $(n^2 \sqrt{2} + n \sqrt{3})_{n \in \mathbb{N}}$, $(n^3 + (\log n)^3)_{n \in \mathbb{N}}$. From now on, whenever we write $(\lfloor n^c \rfloor)$, it represents the entire class of Hardy sequences of polynomial growth and logarithmically away from every multiple of polynomial of integer coefficients.
    \subsection{Good sequences}
    \label{subsection:good-sequences}
    
        \begin{definition}
            \begin{enumerate}
                \item The sequence $(r_n)_{n \in \mathbb{N}}$ is said to be \emph{linearly good for projection onto nilfactors} (denoted by \emph{linear-GPN}) if for any $h_1, h_2, \ldots, h_k \in \mathbb{Z}$, there is some $m$ such that $m$-step nilfactor is characteristic for $(h_1 r_n, h_2 r_n, \ldots, h_k r_n)$.
            
                \item Similarly, $(r_n)_{n \in \mathbb{N}}$ is said to be \emph{polynomially good for projection onto nilfactors} (\emph{polynomial-GPN}) if for any $s_1, s_2, \dots, s_k \in \mathbb{Z}[n]$, there is some $m$ such that $m$-step nilfactor is characteristic for $(s_1(r_n), s_2(r_n), \dots, s_k(r_n))$. It is obvious that a polynomial-GPN sequence is linear-GPN. 
            \end{enumerate}
        \end{definition}
        
        By the works of Host-Kra \cite{hk05-2} and Leibman \cite{leib05-2}, polynomial sequence $(Q(n))$ is polynomial-GPN. On the other hand, Frantzikinakis \cite{frant10} shows that $(\lfloor n^c \rfloor)$ with $c > 0, c \not \in \mathbb{Z}$ is linear-GPN. 
        
        \begin{definition}
            \begin{enumerate}
                \item The sequence $(r_n)$ is said to be \emph{good for equidistribution on nilmanifolds} (denoted by \emph{GEN}) if for an ergodic nilsystem $(X, \mu, \tau)$, the sequence $(\tau^{r_n} 1_X)_{n \in \mathbb{N}}$ is equidistributed on $X$.
            
                \item The sequence $(r_n)$ is called \emph{essentially good for equidistribution on nilmanifolds (EGEN)} if the following holds: Suppose for some $s, d \in \mathbb{N}$ such that the set $\{n \in \mathbb{N}: r_n \equiv s \pmod d\}$ has positive upper density. Let $r_{s \pmod d, n}$ denote the $n^{th}$ element of $\{ r_m: m \in \mathbb{N} \}$ that is congruent to $s \pmod d$. Let $(X, \mu, \tau)$ be an ergodic nilsystem with $d$ connected components and $X_0$ be the component containing $1_X$. Then the sequence $(\tau^{r_{s \pmod d, n}} 1_X)_{n \in \mathbb{N}}$ is equiditributed on $\tau^s X_0$.
            \end{enumerate}
        \end{definition}

\begin{remark*}
Here is the difference between GEN and EGEN. In an ergodic nilsystem, the orbit of any point along a GEN sequence is equidistributed on the on the nilmanifold. On the other hand, the orbit along an EGEN sequence may not be. However, if we restrict to a suitable arithmetic progression, the orbit now is equidistributed on a connected component of the nilmanifold.
                
It is easy to see that a GEN sequence is EGEN. 
\end{remark*}

        Frantzikinakis \cite{frant09} proves that $(\lfloor n^c \rfloor)$ with $c > 0$, $c \not \in \mathbb{Z}$ is GEN. He also proves polynomial sequences are EGEN \cite[Lemma 6.7]{frant10}. To demonstrate why polynomials satisfy EGEN property but not GEN, take for example $Q(n) = n^2$, $X = \mathbb{T} \times \mathbb{Z}/3$ and $\tau = (\alpha, \bar{1})$ where $\alpha$ is irrational. Then since $n^2 \equiv 0$ or $1 \pmod 3$, the sequence $(\tau^{n^2} (0,0))$ never visits the connected component $\mathbb{T} \times \bar{2}$. So it is not equidistributed on entire $\mathbb{T} \times \mathbb{Z}/3$. However, if we restrict to those $n \equiv 0 \pmod 3$, i.e $n = 3m$ then the sequence $(\tau^{((3m)^2}(0,0)$ is now equidistributed on the component $\mathbb{T} \times \bar{0}$. Similarly the sequences $(\tau^{(3m+1)^2}(0,0))$ and $(\tau^{(3m+2)^2}(0,0))$ are equidistributed on $\mathbb{T} \times \bar{1}$.
        
        \begin{definition}
            A sequence that is both linear-GPN and EGEN is called a \emph{linear-good sequence}. Analogously, a sequence that is both polynomial-GPN and EGEN is called \emph{polynomial-good sequence}.
        \end{definition}
        
        From above discussion, we see that polynomial sequence $(Q(n))$ is polynomial-good while Hardy sequence $(\lfloor n^c \rfloor)$ is linear-good.

\subsection{Gaussian system}
\label{subsection:Gaussian-system}
For a positive measure $\sigma$ on $\mathbb{T}$, there exists a Gaussian system $(X, \mu, T)$ and function $g \in L^2(\mu)$ such that $\hat{\sigma}(n) = \int_X g T^n \bar{g} \, d \mu$ for all $n \in \mathbb{N}$. If $\sigma$ is a probability measure, then $\lVert g \rVert_{L^2(\mu)} = 1$. It is worth to mention that $g$ is a Gaussian variable, hence is unbounded. See \cite[pages 369-371]{Cornfeld_Fomin_Sinai82} for details. 

\subsection{Rigid sequences}
	\label{subsection:backgroun-rigidity-sequences}
	We recall the defintion of rigid sequences from the introduction. An increasing sequence of integers $(r_n)$ is called rigid if there is a weakly mixing system $(X, \mu, T)$ such that $\lVert T^{r_n} f - f \rVert_{L^2(\mu)} \to 0$ for all $f \in L^2(\mu)$. Using Gaussian systems, we can show that a sequence $(r_n)$ is rigid if and only if there is a continuous measure $\sigma$ on $\mathbb{T}$ such that $\hat{\sigma}(r_n) \to 1$ as $n \to \infty$. 
    
    Examples of rigid sequences include $(q^n)_{n \in \mathbb{N}}$ for $q \in \mathbb{N}$, $q \geq 2$. Generally, an increasing sequence $(r_n)$ such that $r_n | r_{n+1}$ is rigid \cite{bergelson_deljunco_lemanczyk_rosenblatt_2014,Eisner-Grivaux-2011}. Furtheremore, there is rigid sequence with very slow growth. Let $(d_n)$ be an increasing sequence of integers of density zero. Then there is a rigid sequence $(r_n)$ such that $r_n \leq d_n$ for all $n \in \mathbb{N}$ \cite{Aaronson1979}. See \cite{bergelson_deljunco_lemanczyk_rosenblatt_2014}, \cite{Eisner-Grivaux-2011}, \cite{Badea-Grivaux2018} and \cite{Fayad-Thouvenot-2014} for more exhaustive lists of rigid sequences.        
    
\section{Integral of nilsequences}
    \label{section:integral-of-nilsequence}
    
To prove a correlation sequence in a non-ergodic system has a nil+null decomposition, Leibman \cite{leib15} shows that an integral of nilsequences has such decomposition. For this purpose, by a series of reduction, Leibman proves that it suffices to show:
\begin{proposition}[Leibman {\cite[Proposition 4.3]{leib15}}]
    \label{proposition:reduction-to-integration-on-nilgroup}
    Let $X = G/\Gamma$ be a nilmanifold, $\rho$ be a finite Borel measure on $G$ and $f \in C(X)$, then the sequence $\varphi(n) = \int_G f(g^n 1_X) \, d \rho(g)$ has a nil+null decomposition.
\end{proposition}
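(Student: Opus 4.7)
The plan is to induct on the nilpotency step $k$ of $G$, mirroring the abelian case and using the tower structure of nilmanifolds. A preliminary reduction to $G$ connected and simply connected can be made as recalled in Section~\ref{subsec:nilseq}: embed $X$ in an ambient $X' = G'/\Gamma'$, extend $f \in C(X)$ continuously to $X'$, and push $\rho$ forward along $G \hookrightarrow G'$, producing a problem of the same form.

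Base case $(k=1)$: $X$ is then a compact abelian Lie group and $G$ is connected abelian, so every character $\chi \in \widehat X$ lifts to a character $\widetilde\chi$ of $G$ trivial on $\Gamma$. Expanding $f = \sum_{\chi} \widehat f(\chi)\, \chi$ in the Pontryagin basis and integrating termwise,
\[
    \varphi(n) = \sum_{\chi} \widehat f(\chi) \int_G \widetilde\chi(g)^n \, d\rho(g) = \sum_{\chi} \widehat f(\chi)\, \widehat{\rho_\chi}(n),
\]
where $\rho_\chi$ is the pushforward of $\rho$ under $\widetilde\chi \colon G \to \mathbb{T}$. Decomposing each $\rho_\chi$ into its atomic and continuous parts, the atomic contribution is a uniformly convergent sum of characters $\sum_j a_j e^{2\pi i n t_j}$, hence a $1$-step nilsequence; Wiener's theorem gives $\mathbb{E}_{n \in [N]} |\widehat{(\rho_\chi)_{\mathrm{c}}}(n)|^2 \to 0$ for the continuous part, which (using $\widehat f \in \ell^2(\widehat X)$ and Cauchy--Schwarz) propagates to a null sequence after summing over $\chi$.

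Inductive step $(k \geq 2)$: let $Y = G/(G_k \Gamma)$ be the $(k-1)$-step nilmanifold obtained by collapsing the last non-trivial commutator subgroup $G_k$, and decompose $f = \mathbb{E}(f \mid Y) + f_\perp$, where $f_\perp$ has vanishing integral along each $G_k$-orbit in $X$. The contribution of $\mathbb{E}(f \mid Y)$ admits a nil+null decomposition by the inductive hypothesis applied to $Y$. For $f_\perp$, I would run a van der Corput argument on $\varphi_\perp(n) = \int_G f_\perp(g^n 1_X) \, d\rho(g)$: its modulus squared equals
\[
    |\varphi_\perp(n)|^2 = \int_{G \times G} f_\perp(g^n 1_X) \overline{f_\perp(g'^n 1_X)} \, d\rho(g)\, d\rho(g'),
\]
whose joint dynamics descends to a function on the relative product $X \times_Y X$, which has step at most $k-1$; the fiber-mean-zero condition on $f_\perp$ forces the inner integrand to have zero nil component there, so applying the inductive hypothesis followed by Cesaro averaging in $n$ yields that $|\varphi_\perp|^2$ is null, whence so is $\varphi_\perp$.

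The main obstacle is the inductive step: setting up the van der Corput / relative-product reduction so that the auxiliary integrand is genuinely a function on a lower-step nilmanifold (to which the inductive hypothesis applies) requires carefully tracking how $\rho \otimes \rho$ and the basepoint descend to $X \times_Y X$, and verifying that the fiber-mean-zero condition forces the corresponding inductive decomposition to have trivial nil component. Only then does the van der Corput bound close up to give that $\varphi_\perp$ is null rather than merely nil+null.
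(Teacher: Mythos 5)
There is a genuine gap in the inductive step, and it sits exactly where your closing paragraph locates ``the main obstacle.'' You decompose only the function, $f = \mathbb{E}(f\mid Y) + f_\perp$, with the quotient $Y = G/(G_k\Gamma)$ fixed in advance by the group structure, and you then aim to show that $\varphi_\perp$ is \emph{null}. That statement is false: take $\rho = \delta_g$ a point mass at a generic element $g$ of the Heisenberg group and $f = f_\perp$ a nontrivial vertical-character component; then $\varphi_\perp(n) = f_\perp(g^n 1_X)$ is a basic nilsequence with $\mathbb{E}_{n\in[N]}\lvert\varphi_\perp(n)\rvert^2 \to \int_X \lvert f_\perp\rvert^2\, d\mu_X > 0$. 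The fiber-mean-zero condition does not kill the second moment, because the orbit closure of $(1_X,1_X)$ under $(g,g')$ in $X\times_Y X$ need not be saturated under the fibers: on the diagonal $g=g'$ it lies inside $\Delta X$, where $f_\perp\otimes\bar f_\perp$ integrates to $\lVert f_\perp\rVert_{L^2}^2$, and the diagonal of $G\times G$ can carry positive $\rho^{\times 2}$-mass. The quotient that isolates the nil component must therefore depend on $\rho$, not only on $G$. This is precisely why Leibman --- and the paper's own treatment in Lemma \ref{lemma:decomposition-of-measure} and Lemma \ref{lemma:main-integral-combination} --- first decomposes the \emph{measure}, $\rho = \sum_{V}\rho_V$, so that each $\tilde\rho_V$ is carried by a connected subnilmanifold $V$ and gives zero mass to every proper subnilmanifold of $V$; for such a piece the right quotient is $\hat X = X/Z$ with $Z$ attached to the normal closure of $V$, the nil component is the single basic nilsequence $\hat f(\pi(\tau^n))$, and the second-moment argument closes because for $\rho_V^{\times 2}$-almost every pair the orbit of $(1_X,1_X)$ equidistributes on the full relative square $X\times_{\hat X}X$. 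Notice that your base case does perform the analogous measure decomposition (atomic plus continuous parts of each $\rho_\chi$); the inductive step drops it, and without it the argument asserts something false.

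Two smaller points. First, $X\times_Y X$ is not of step at most $k-1$: the structure group $G\times_{G/G_k}G$ still contains $\Delta G_k$ in the $k$-th term of its lower central series, so one must first do a vertical Fourier decomposition and pass to the further quotient by the diagonal $G_k$-action, under which $f_\chi\otimes\bar f_\chi$ is invariant; this is repairable. Second, in the base case the termwise summation over $\chi$ needs $f$ to have absolutely convergent Fourier series, which a general continuous $f$ need not; approximate $f$ uniformly by trigonometric polynomials first. Neither of these is fatal, but the missing measure decomposition in the inductive step is.
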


By the same reduction, for the purpose of showing Proposition \ref{proposition:integral-of-nilsequences}, i.e. the null component of an integral of nilsequences is null along EGEN sequences, it suffices to show:
\begin{proposition}
    \label{proposition:reduction-to-integration-on-nilgroup2}
    With the set-up as in Proposition \ref{proposition:reduction-to-integration-on-nilgroup}, in the nil+null decomposition of $(\varphi(n))$, the null component is null along any EGEN sequence.
\end{proposition}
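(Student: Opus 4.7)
The plan is to rerun Leibman's proof of Proposition~\ref{proposition:reduction-to-integration-on-nilgroup} by induction on the nilpotency class of $G$, verifying at each stage that the null component produced is null along every EGEN sequence, not merely along $\mathbb{N}$. The only new ingredient is an upgrade of the equidistribution inputs: wherever Leibman invokes equidistribution of $(g^n 1_X)$ on components of its orbit closure along residue classes of $\mathbb{N}$, I would use the EGEN property of $(r_n)$ to obtain the analogous equidistribution along residue classes of $(r_n)$, and then appeal to dominated convergence to interchange the integration over $g \in G$ with the density average over $n$.

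First I would handle the abelian base case. When $G$ is abelian, Fourier-expanding $f$ in characters of $G^\circ/(G^\circ \cap \Gamma)$ and using the multiplicativity $\chi_\xi(g^n 1_X) = \chi_\xi(g)^n$ reduces $\varphi(n)$ to a linear combination of Fourier coefficients $\widehat{\xi_\ast \rho}(n)$ of the pushforward measures on a torus. A Herglotz decomposition splits each such measure into discrete plus continuous parts, the former giving an almost periodic ($1$-step nil) contribution and the latter giving the null component. The central technical lemma is: for any finite continuous measure $\sigma$ on $\mathbb{T}$ and any EGEN sequence $(r_n)$,
\[
    \mathbb{E}_{n \in [N]} |\widehat{\sigma}(r_n)|^2 = \iint \mathbb{E}_{n \in [N]} e^{2\pi i r_n (x-y)} \, d\sigma(x)\, d\sigma(y) \to 0.
\]
Indeed, when $\alpha := x - y$ is irrational, the inner density average vanishes by EGEN applied to the ergodic rotation $(\mathbb{T}, +\alpha)$; when $\alpha = p/q$ is rational, EGEN applied to the finite ergodic $q$-component nilsystem $(\{0, \alpha, \ldots, (q-1)\alpha\}, +\alpha)$ yields only a bounded inner limit, but the set $\{x - y \in \mathbb{Q}\}$ is $(\sigma \times \sigma)$-null because $\sigma$ is continuous, so dominated convergence absorbs this contribution. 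Cauchy--Schwarz then upgrades to $\mathbb{E}_{n \in [N]}|\widehat{\sigma}(r_n)| \to 0$.

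Second I would handle the inductive step using a central extension $1 \to Z \to G \to G/Z \to 1$ with $Z$ a connected central subgroup. Fourier-decomposing $f$ along the fibers of the projection $X \to G/(Z\Gamma)$, each Fourier mode contributes to $\varphi$ an integral of nilsequences on the lower-step quotient $G/Z$, weighted by a Fourier coefficient on the central $Z$-torus. The inductive hypothesis handles the $G/Z$-side, and the base-case lemma handles the central Herglotz piece. Combining via Fubini and the dominated convergence theorem, with uniform $\ell^\infty$ bounds on Fourier modes of the continuous function $f$ justifying the interchange of summation, integration over $g$, and density averaging over $n$, yields EGEN-nullness of the null component of $\varphi$.

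The hard part will be ensuring that the null components at each layer of the induction combine measurably and coherently, so that dominated convergence can be applied to $\mathbb{E}_{n \in [N]}|\epsilon(r_n)|$. Concretely, one must establish uniform-in-parameter bounds on the layer-wise null contributions and measurability of the central-fiber decompositions as functions of $g \in G$, taking care that the modulus $d_g$ governing which residue classes of $(r_n)$ one has to stratify varies with $g$. Stratifying the support of $\rho$ into the countably many Borel pieces on which $d_g$ is constant, and applying EGEN with a fixed modulus on each piece before summing via countable additivity, should resolve this; the remaining details track Leibman's original argument verbatim with the equidistribution inputs replaced by their EGEN analogues.
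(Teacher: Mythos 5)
Your abelian base case is essentially sound: for a continuous measure $\sigma$ on $\mathbb{T}$ and an EGEN sequence $(r_n)$, the identity $\mathbb{E}_{n \in [N]}|\widehat{\sigma}(r_n)|^2 = \iint \mathbb{E}_{n \in [N]} e^{2\pi i r_n (x-y)}\, d\sigma(x)\, d\sigma(y)$ together with EGEN for irrational rotations, the $(\sigma\times\sigma)$-nullity of $\{x-y \in \mathbb{Q}\}$, and dominated convergence does give $\mathbb{E}_{n\in[N]}|\widehat{\sigma}(r_n)|\to 0$. But the inductive step contains a genuine gap. For a nontrivial vertical character $\chi$ of the central torus $Z/(Z\cap\Gamma)$, the sequence $f_\chi(g^n 1_X)$ does \emph{not} factor as a nilsequence on $G/Z$ weighted by a Fourier coefficient on $Z$: the $Z$-coordinate of $g^n 1_X$ is a polynomial in $n$ of degree equal to the step of $G$ (already quadratic in the Heisenberg case), so $\int_G f_\chi(g^n 1_X)\,d\rho(g)$ is a genuinely $k$-step object and does not reduce to the lower-step inductive hypothesis times a Herglotz piece. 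Your proposed combination "via Fubini and dominated convergence" has nothing to apply to for these modes, and the stratification by the modulus $d_g$ at the end does not repair this; it addresses a different (and more minor) issue.

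This also mischaracterizes Leibman's argument, which is not an induction on nilpotency class. The mechanism that actually closes the argument — and the one the paper uses — is a second-moment computation over the relative product $X \times_{\hat{X}} X$ with $\hat{X} = X/Z$: after using Leibman's Lemma 4.4 to decompose $\rho$ into countably many pieces $\rho_V$ concentrated on connected subnilmanifolds (giving no mass to proper subnilmanifolds), one passes to the normal closure $\tau Z$ of $V$, assumes $\mathbb{E}(f|\hat{X})=0$, and writes $|\varphi(n)|^2$ as an integral over $(a,b)$ of $f\otimes\bar f$ along the orbit of $(\tau a,\tau b)$ in $X\times_{\hat{X}}X$. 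Leibman's Lemma 4.6 guarantees that for $\rho^{\times 2}$-a.e.\ $(a,b)$ this orbit is equidistributed in the whole relative square, whose component count $d$ is then \emph{fixed} for the piece $V$; EGEN is invoked exactly once, to show that equidistribution along $(r_{s \pmod d, n})$ lands on the same component as equidistribution along $(dn+s)$, whence $\lim_N \mathbb{E}_{n\in[N]}|\varphi(r_{s \pmod d,n})|^2 = \lim_N \mathbb{E}_{n\in[N]}|\varphi(dn+s)|^2 = 0$. Without this relative-square step (or an equivalent van der Corput differencing to kill the nontrivial vertical modes), your induction does not go through.
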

    
The rest of this section is devoted to prove Proposition \ref{proposition:reduction-to-integration-on-nilgroup2}. We start with a lemma.

\begin{lemma}
    \label{lemma:main-integral-combination}
    Let $X= G/\Gamma$ be a nilmanifold and $Z$ be a normal subnilmanifold that contains $1_X$. Suppose $\tau \in G^0$ such that $(\tau^n Z)_{n \in \mathbb{N}}$ is dense in $X$. Let $\rho$ be a finite Borel measure on $G$ such that for $\tilde{\rho} = \pi_*(\rho)$ we have $\rm{supp} (\tilde{\rho}) \subseteq \tau Z$ and $\tilde{\rho}(\tau W) = 0$ for any proper normal subnilmanifold $W$ of $Z$. Let $\varphi(n) = \int_G f ( g^n 1_X) \, d \rho(g)$ for $n \in \mathbb{N}$, $\hat{X} = X/Z$ and $\hat{f} = \mathbb{E}(f|\hat{X})$. Then $(\varphi(n) - \hat{f}(\pi(\tau ^n)))_{n \in \mathbb{N}}$ is null along any EGEN sequence. In particular, the null component of $\varphi$ is null along any EGEN sequence.
\end{lemma}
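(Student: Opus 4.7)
The plan is to decompose $f$ along the $Z$-direction and handle the two pieces separately; the spread-out hypothesis on $\tilde{\rho}$ is tailored to cancel oscillations transverse to $Z$. Let $\hat\pi \colon X \to \hat X$ be the quotient, set $f_1 := \hat f \circ \hat\pi$, and $f_2 := f - f_1$; by definition of conditional expectation, $f_2$ integrates to zero on every $Z$-fiber. After normalising so that $\rho$ has total mass one (which one may do by linearity) and pulling the constant $\hat f(\pi(\tau^n))$ inside the integral, one obtains
\[
\varphi(n) - \hat f(\pi(\tau^n)) = \underbrace{\int_G f_2(g^n 1_X)\,d\rho(g)}_{A(n)} + \underbrace{\int_G \bigl[(\hat f\circ\hat\pi)(g^n 1_X) - (\hat f\circ\hat\pi)(\tau^n 1_X)\bigr]\,d\rho(g)}_{B(n)},
\]
and it suffices to prove that each of $A, B$ is null along any EGEN sequence.

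To handle $B(n)$, note that $\mathrm{supp}(\rho) \subseteq \tau L\Gamma$, where $L \trianglelefteq G$ is the closed normal subgroup with $Z = L\cdot 1_X$. For $g = \tau h$ with $h \in L\Gamma$, the coset $L\Gamma$ stabilises $1_{\hat X}$, so $g\cdot 1_{\hat X} = \tau\cdot 1_{\hat X}$. Iterating and using the Hall--Petresco formula, write $g^n = \tau^n\,\xi_n$ with $\xi_n = \xi_n(h) \in G$ a polynomial expression generated by commutators of $\tau$ with $h$. In the abelian base case $\xi_n \in L\Gamma$ automatically and $B \equiv 0$; in general $\xi_n \cdot 1_{\hat X}$ traces a polynomial orbit in $\hat X$. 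The density of $(\tau^n Z)$ in $X$ is equivalent to $\tau$ being ergodic on $\hat X$, so one can apply the EGEN hypothesis to the polynomial orbit $(\tau^n \xi_n\cdot 1_{\hat X})$ inside a suitable product nilsystem containing $\hat X$, concluding $\tfrac{1}{N}\sum_{n\le N}|B(r_n)| \to 0$ for every EGEN sequence $(r_n)$.

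For $A(n)$, since $f_2$ has zero fiber averages, approximate it uniformly by a finite combination of continuous functions on $X$ transforming by nontrivial characters under the $Z$-action. Each such piece contributes an integral of the form $\int_G \chi(g^n 1_X)\,d\rho(g)$, which is a Fourier-type coefficient of the pushforward $\rho_n := (g \mapsto g^n 1_X)_{*}\rho$. The geometric hypothesis $\tilde{\rho}(\tau W)=0$ for proper normal subnilmanifolds $W\subseteq Z$ is precisely the non-atomicity of the spectral measures controlling these coefficients, since proper normal subnilmanifolds of $Z$ are exactly the level sets of the relevant $Z$-characters. Combining non-atomicity with the Wiener $L^2$-type estimate along EGEN sequences---valid because EGEN sequences equidistribute on each connected component of an ergodic nilsystem, so the Cesaro averages $\tfrac{1}{N}\sum_n e^{2\pi i r_n \alpha}$ vanish outside a countable set carrying no mass for a non-atomic spectral measure---yields $\tfrac{1}{N}\sum_{n\le N}|A(r_n)|^2 \to 0$, and hence $\tfrac{1}{N}\sum |A(r_n)| \to 0$ by Cauchy--Schwarz. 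The ``in particular'' clause follows because $n\mapsto \hat f(\pi(\tau^n))$ is a basic nilsequence from $(\hat X, \tau)$, and uniqueness of nil$+$null decomposition (Section~\ref{subsection:backgroun-uniqueness-of-nil-null-decomposition}) identifies $\varphi - \hat f\circ\pi\circ\tau^{\,\cdot}$ with the null component of $\varphi$.

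The analytically hardest step is the treatment of $A(n)$: one must translate the geometric spread-out hypothesis $\tilde{\rho}(\tau W)=0$ into genuine non-atomicity of the spectral measures Wiener's theorem processes, and upgrade the classical Wiener $L^2$ theorem from ordinary density to density along an arbitrary EGEN sequence. The $B$-estimate is conceptually easier but is exactly where the EGEN (as opposed to GEN) hypothesis is used, since $g^n \cdot 1_{\hat X}$ is a polynomial, not merely linear, orbit on $\hat X$.
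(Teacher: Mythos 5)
There is a genuine gap, and it sits exactly where you flag the difficulty: the treatment of $A(n)$. Decomposing $f_2$ into ``continuous functions transforming by nontrivial characters under the $Z$-action'' and asserting that ``proper normal subnilmanifolds of $Z$ are exactly the level sets of the relevant $Z$-characters'' is only valid when $Z$ is a torus (i.e.\ when the subgroup $L$ with $\pi(L)=Z$ is abelian). For a nilmanifold $Z$ of step $\geq 2$ there is no such character decomposition of the fibers, and proper normal subnilmanifolds of $Z$ are far more plentiful than character level sets; consequently the hypothesis $\tilde{\rho}(\tau W)=0$ cannot be translated into non-atomicity of spectral measures on $\mathbb{T}$, and a Wiener-type argument sees only the abelianization of the problem. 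Your proposal therefore proves the lemma essentially only in the abelian case. The $B(n)$ term is also not handled legitimately: EGEN, as defined, is a statement about \emph{linear} orbits $(\tau^{r_{s \pmod d, n}} 1_X)$ in ergodic nilsystems, so ``applying the EGEN hypothesis to the polynomial orbit $(\tau^n \xi_n \cdot 1_{\hat{X}})$'' is not licensed by the definition without a further (nontrivial) reduction; in the intended setup this term simply vanishes because $\rho$ may be arranged to be supported on $\tau L$, so that $g^n \in \tau^n L$ and $\hat{\pi}(g^n 1_X)=\tau^n 1_{\hat{X}}$ exactly.

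The paper's mechanism, which your proposal does not contain, is a second-moment transfer on the relative square. After reducing to $\mathbb{E}(f|\hat{X})=0$, one writes $|\varphi(m)|^2=\int_{L\times L} f\otimes\bar{f}\bigl(\pi^{\times 2}((\tau a,\tau b)^m)\bigr)\,d\rho^{\times 2}(a,b)$, and Leibman's Lemma 4.6 (which is where the hypothesis $\tilde{\rho}(\tau W)=0$ actually enters) gives that for $\rho^{\times 2}$-a.e.\ $(a,b)$ the \emph{linear} orbit of $(1_X,1_X)$ under $(\tau a,\tau b)$ equidistributes on $X\times_{\hat{X}}X$. Letting $d$ be the number of connected components of $X\times_{\hat{X}}X$ and applying the EGEN property to these ergodic linear orbits, one gets $\lim_N \mathbb{E}_{n\in[N]}|\varphi(dn+s)|^2=\lim_N \mathbb{E}_{n\in[N]}|\varphi(r_{s \pmod d, n})|^2$ for each admissible residue $s$. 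The conclusion then follows from the \emph{already known} nullity of $\varphi$ along $\mathbb{N}$ (Leibman's Proposition 4.3), which passes to the positive-density progression $(dn+s)$ and hence, by the displayed equality, to $(r_{s \pmod d, n})$ and to $(r_n)$. Your argument never invokes this known nullity along the integers; it attempts to re-derive nullity from scratch by Fourier analysis, which is precisely what fails beyond the abelian setting. To repair the proof you would need to replace the $A(n)$ step by this second-moment comparison (or an equivalent device that exploits both Leibman's a.e.\ equidistribution lemma and the nullity of $\varphi$ along $\mathbb{N}$).
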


\begin{proof}
Let $(r_n)$ be an arbitrary EGEN sequence. Replacing $f$ by $f - \hat{f}$, we can assume $\mathbb{E}(f|\hat{X}) = 0$. We are left with showing $(\varphi(n))_{n \in \mathbb{N}}$ is null along $(r_n)$. 

Shifting $\rho$ to the origin by replacing it by $\tau_*^{-1} \rho$. Let $L$ be a connected subgroup of $G$ such that $\pi(L) = Z$. So now $\rm{supp}(\rho) \subseteq L$ and $\rm{supp}(\tilde{\rho}) \subseteq Z$. 
        
Let $d \in \mathbb{N}$ be the number of connected components of $X \times_{\hat{X}} X$. Note that to show $\varphi$ is null along $(r_n)$, it suffices to show $\varphi$ is null along $(r_{s \pmod d, n})$ for any $0 \leq s \leq d-1$ such that the set $\{n \in \mathbb{N}: r_n \equiv s \pmod d\}$ has positive upper density. Let $s$ be one of such number. Define
\[
    H(a,b) = \lim_{N \rightarrow \infty} \mathbb{E}_{n \in [N]} f \otimes \bar{f} (\pi^{\times 2}((\tau a, \tau b)^{r_{s \pmod d, n}}))
\]
and
\[
    F(a,b) = \lim_{N \rightarrow \infty} \mathbb{E}_{n \in [N]} f \otimes \bar{f} (\pi^{\times 2}((\tau a, \tau b)^{dn + s}))
\]
for $(a, b) \in L \times L$.

According to the proof of Lemma 4.6 in Leibman \cite{leib15}, for $\rho^{\times 2}$-almost every $(a,b) \in L \times L$, the sequence $u_n = (\tau a, \tau b)^n (1_X, 1_X) = \pi^{\times 2}((\tau a, \tau b)^n)$ is equidistributed on $X \times_{\hat{X}} X$. Therefore, for those $(a,b)$, by Section \ref{subsubsec:connected-nilorbits}, the sequences $(\tau a, \tau b)^{dn +s} (1_X, 1_X)$ is equidistributed on $(\tau a, \tau b)^s (X \times_{\hat{X}} X)_o$ where $X \times_{\hat{X}} X)_o$ is the connected component of $X \times_{\hat{X}} X$ containing $(1_X, 1_X)$. 

On the other hand, by definition of EGEN, the sequence $(\tau a, \tau b)^{r_{s \pmod d, n}}(1_X, 1_X)$ is also equidistributed on $(\tau a, \tau b)^s (X \times_{\hat{X}} X)_o$. That implies $H(a,b) = F(a,b) = \int_{(\tau a, \tau b)^s (X \times_{\hat{X}} X)_o} f \otimes \bar{f} \, d \mu_{(\tau a, \tau b)^s (X \times_{\hat{X}} X)_o}$. This equality holds for $\rho^{\times 2}$-almost every $(a,b) \in L \times L$. So by taking integral on $L \times L$, with respect to $\rho^{\times 2}$, we get
\[
    \lim_{N \rightarrow \infty} \mathbb{E}_{n \in \mathbb{N}} |\varphi(dn+s)|^2 = \lim_{N \rightarrow \infty} \mathbb{E}_{n \in \mathbb{N}} |\varphi(r_{s \pmod d,n})|^2.
\]

The sequence $(\varphi(n))_{n \in \mathbb{N}}$ is a null sequence along $(n)_{n \in \mathbb{N}}$. The subsequence $(dn + s)_{n \in \mathbb{N}}$ has density $1/d$ in $\mathbb{N}$. It follows that $(\varphi(n))_{n \in \mathbb{N}}$ is also a null sequence along $(dn+s)_{n \in \mathbb{N}}$. Thus it follows that $\varphi$ is null along $(r_{s \pmod d, n})_{n \in \mathbb{N}}$. This fact holds true for any $0 \leq s \leq d-1$ such that $\{n \in \mathbb{N}: r_n \equiv s \pmod d\}$ has positive upper density. Hence $\varphi$ is null along $(r_n)$. Since $(r_n)$ is an arbitrary EGEN sequence, we have $\varphi$ is null along any EGEN sequence. 

We just show $(\varphi(n) - \hat{f}(\pi(\tau^n)))$ is null along any EGEN sequence. On the other hand, $\hat{f}(\pi(\tau^n))$ is a nilsequence by definition. Thus $(\varphi(n) - \hat{f}(\pi(\tau^n))$ is the null component of $\varphi(n)$, and it is null along any EGEN sequence. This finishes our proof.
\end{proof}

We need a lemma from Leibman \cite{leib15}.

\begin{lemma}[Leibman{\cite[Lemma 4.4]{leib15}}]
\label{lemma:decomposition-of-measure}
    Let $X = G/\Gamma$ be a nilmanifold with standard quotient map $\pi: G \rightarrow X$. Suppose $\rho$ is a finite Borel measure on $G$. Then there exists an at most countable collection $\mathcal{V}$ of connected subnilmanifolds of $X$ and finite Borel measure $\rho_V$ for $V \in \mathcal{V}$ on $G$ such that $\rho = \sum_{V \in \mathcal{V}} \rho_V$ and for every $V \in \mathcal{V}$, $\rm{supp}(\tilde{\rho}_V) \subseteq V$ and $\tilde{\rho}_V(S) = 0$ for any proper subnilmanifold $S$ of $V$ where $\tilde{\rho}_V = \pi_*(\rho_V)$.
\end{lemma}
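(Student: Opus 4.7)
The plan is to produce the decomposition for the pushforward $\tilde{\rho} = \pi_*(\rho)$ on $X$ first, and then lift back to $G$ by restriction: if $\{A_V\}_{V \in \mathcal{V}}$ is a Borel partition of $\mathrm{supp}(\tilde{\rho})$ with $A_V \subseteq V$, then setting $\rho_V := \rho|_{\pi^{-1}(A_V)}$ will give the required decomposition of $\rho$. I would construct $\mathcal{V}$ by stratification in increasing dimension, building families $\mathcal{V}_0, \mathcal{V}_1, \ldots, \mathcal{V}_{\dim X}$ inductively.

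At dimension $0$, connected subnilmanifolds are singletons, so $\mathcal{V}_0$ is exactly the at most countable set of atoms of $\tilde{\rho}$. Inductively, suppose $\mathcal{V}_{<k} := \mathcal{V}_0 \cup \cdots \cup \mathcal{V}_{k-1}$ and Borel sets $A_V$ for $V \in \mathcal{V}_{<k}$ have been chosen, and define the residual measure $\tilde{\rho}^{(k)} := \tilde{\rho} - \sum_{V \in \mathcal{V}_{<k}} \tilde{\rho}|_{A_V}$. The crucial claim is that if $V_1, V_2$ are two distinct connected subnilmanifolds of $X$ of the same dimension $k$, then every connected component of $V_1 \cap V_2$ is a subnilmanifold of dimension strictly less than $k$, and hence by the inductive construction has already been accounted for in $\tilde{\rho}^{(k)}$. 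Consequently the restrictions $\tilde{\rho}^{(k)}|_{V_1}$ and $\tilde{\rho}^{(k)}|_{V_2}$ are mutually singular, and by finiteness of $\tilde{\rho}$ only at most countably many $k$-dimensional connected subnilmanifolds $V$ can satisfy $\tilde{\rho}^{(k)}(V) > 0$. Collect these into $\mathcal{V}_k$ and set $A_V := V \setminus \bigcup_{V' \in \mathcal{V}_{<k}} A_{V'}$ for each.

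Setting $\mathcal{V} := \bigcup_k \mathcal{V}_k$ and $\rho_V := \rho|_{\pi^{-1}(A_V)}$ then gives $\rho = \sum_V \rho_V$ with $\mathrm{supp}(\tilde{\rho}_V) \subseteq V$. For any proper subnilmanifold $S \subseteq V$, each connected component of $S$ has strictly smaller dimension than $V$ (using that $V$ is connected), hence sits in an earlier stratum; so any $\tilde{\rho}_V$-mass on $S$ was stripped away during the construction and $\tilde{\rho}_V(S) = 0$. The main obstacle is justifying the countability step, which rests on the structural fact that two distinct same-dimensional connected subnilmanifolds of $X$ intersect in something of strictly lower dimension; once this is in place, the mutual-singularity observation converts a potentially uncountable family of candidate subnilmanifolds into a countable one using nothing more than finiteness of $\tilde{\rho}$.
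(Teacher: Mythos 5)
This lemma is not proved in the paper at all: it is imported verbatim from Leibman's work (\cite[Lemma 4.4]{leib15}), so there is no in-paper argument to measure your proposal against, and I can only assess the proposal on its own terms. Your overall strategy --- stratify by dimension, strip off the mass carried by lower-dimensional connected subnilmanifolds first, and use mutual singularity together with finiteness of $\tilde{\rho}$ to get countability at each stratum --- is a sensible exhaustion scheme, and the deduction ``pairwise mutually singular restrictions of a finite measure, hence only countably many nonzero'' is sound once its hypothesis is in place.

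The genuine gap is exactly the step you flag as ``the main obstacle'' and then assume: that two distinct connected subnilmanifolds $V_1 \neq V_2$ of the same dimension $k$ meet in a set covered by at most countably many connected subnilmanifolds of dimension strictly less than $k$. Every load-bearing step of your argument --- the mutual singularity of $\tilde{\rho}^{(k)}|_{V_1}$ and $\tilde{\rho}^{(k)}|_{V_2}$, the a.e.-disjointness of the sets $A_V$ within a single stratum (note that your definition of $A_V$ only removes earlier strata, not the other members of $\mathcal{V}_k$), and the final verification that $\tilde{\rho}_V(S) = 0$ for proper subnilmanifolds $S$ of $V$ --- funnels through this claim, and it is not a formality. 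Writing $V_i = H_i x$ with $H_i$ closed, the intersection corresponds to the set $H_1 \cap H_2 \Gamma^x$ inside $H_1$ (with $\Gamma^x$ a conjugate of $\Gamma$), which decomposes into cosets $(H_1 \cap H_2)h$; but the corresponding sets $(H_1 \cap H_2)hx$ need not be closed, hence are not themselves subnilmanifolds, so one must pass to orbit closures and then show both that these closures still have dimension less than $k$ and that only countably many distinct pieces occur. The dimension part is salvageable (a connected subnilmanifold of full dimension inside the connected manifold $V_1$ is open and closed in it, hence equals $V_1$), but the countability of the pieces requires a real argument --- all the more so under this paper's convention that $\Gamma$ is merely closed and cocompact rather than discrete. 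Until that structural fact is proved or correctly cited, neither the countability of each $\mathcal{V}_k$ nor the property $\tilde{\rho}_V(S) = 0$ is established, so the proof as written is incomplete at its central point.
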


We are ready to prove Proposition \ref{proposition:reduction-to-integration-on-nilgroup2}.

\noindent\textbf{Proof of Proposition \ref{proposition:reduction-to-integration-on-nilgroup2}.}
    By Lemma \ref{lemma:decomposition-of-measure}, the measure $\rho$ can be decomposed as $\rho = \sum_{V \in \mathcal{V}} \rho_V$ where $\rm{supp} (\tilde{\rho}_V) \subseteq V$ and $\tilde{\rho}_V(S) = 0$ for any proper subnilmanifold $S$ of $V$.
    
    Fix $V \in \mathcal{V}$. Let $V'$ be the normal closure of $V$ in $X$. For any proper normal subnilmanifold $S'$ of $V'$, the intersection $S' \cap V$ is a proper subnilmanifold of $V$ by the minimality of $V'$. Therefore $\tilde{\rho}_V(S' \cap V) = 0$. Since $\rm{supp}(\tilde{\rho}_V) \subseteq V$, we have $\tilde{\rho}_V(S') = \tilde{\rho}_V (S' \cap V) + \tilde{\rho}_V (S' \setminus V) = 0$. 
    
    Write $V' = \tau Z$ for $\tau \in G^0$ and a normal subnilmanifold $Z$ of $X$ that contains $1_X$. By restricting to the orbit closure $(\tau^n Z)_{n \in \mathbb{N}}$ of $Z$, without the loss of generality, we can assume $(\tau^n Z)_{n \in N}$ is dense in $X$. Applying Lemma \ref{lemma:main-integral-combination}, the null component of the sequence $\int_G f(\pi(g^n)) \, d \rho_V(g)$ is null along any EGEN sequence. 
    
    A convergent countable sum of nilsequences is a nilsequence. Likewise, a convergent countable sum of null sequences along any EGEN is null sequence along EGEN sequence. Therefore 
    \[
        \varphi(n) = \int_G f(\pi(g^n)) \, d \rho(g) = \sum_{V \in \mathcal{V}} \int_G f(\pi(g^n)) \, d \rho_V(g)
    \]
    has a nil+null decomposition, and its null component is null along any EGEN sequence. This finishes our proof of Proposition \ref{proposition:reduction-to-integration-on-nilgroup2}. \qedhere

\section{Null along good sequences}
    \label{section:strong-nil-null-decomposition}
    
    In this section, we prove the null component of a polynomial correlation is null along any polynomial-good sequence. By the same proof, the null component of a linear correlation is null along any linear-good sequence.  

\subsection{In a nilsystem}
    \label{section:decomposition-in-ergodic-nilsystems}

    \begin{proposition}
        \label{proposition:decomposition-in-ergodic-nilsystems}
        The null component of a polynomial correlation in a nilsystem is null along any EGEN sequence.
    \end{proposition}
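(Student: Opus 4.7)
The strategy is to realize $a(n)$ as an integral of nilsequences in the sense of Section~\ref{subsection:integral-of-nilsequences} and invoke Proposition~\ref{proposition:integral-of-nilsequences}. A standard approximation of each $f_j\in L^\infty(\mu)$ by continuous functions, using $\tau$-invariance of $\mu$ to control the resulting error uniformly in $n$, reduces to the case $f_j\in C(X)$.

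For fixed $x\in X$, I would show that $\psi_x(n):=\prod_{j=0}^k f_j(\tau^{s_j(n)}x)$ is a basic nilsequence. Setting $F:=f_0\otimes\cdots\otimes f_k$ on the product nilmanifold $X^{k+1}=G^{k+1}/\Gamma^{k+1}$ and $P(n):=(\tau^{s_0(n)},\ldots,\tau^{s_k(n)})\in G^{k+1}$, we have $\psi_x(n)=F\bigl(P(n)\cdot(x,\ldots,x)\bigr)$. Leibman's linearization of polynomial orbits on nilmanifolds then produces a single nilmanifold $\tilde X=\tilde G/\tilde\Gamma$, an element $\tilde\tau\in\tilde G$, a continuous function $\tilde F\in C(\tilde X)$, and a continuous map $\iota:X\to\tilde X$---all independent of $x$---such that
\[
\psi_x(n)=\tilde F\bigl(\tilde\tau^{n}\iota(x)\bigr)\qquad\text{for all }x\in X,\ n\in\mathbb Z.
\]
In particular, each $\psi_x$ is a basic nilsequence, and the family $\{\psi_x:x\in X\}$ is uniformly bounded by $\prod_j\lVert f_j\rVert_\infty$, hence integrable against $\mu$.

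With the representation
\[
a(n)=\int_X\psi_x(n)\,d\mu(x)
\]
in hand, $a$ is an integral of nilsequences, and Proposition~\ref{proposition:integral-of-nilsequences} yields that the null component of $a$ is null along every EGEN sequence.

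The main obstacle I anticipate is the uniform-in-$x$ linearization in the second step: replacing the polynomial orbit $P(n)\cdot(x,\ldots,x)$ in $X^{k+1}$ by a linear orbit $\tilde\tau^{n}\iota(x)$ in a single larger nilmanifold. Such a linearization is classical (Leibman's construction, also present in Green--Tao), but care is required to ensure that $\tilde X$, $\tilde\tau$, and $\tilde F$ can all be chosen independently of $x$ and that $\iota$ is continuous, so that the family $\{\psi_x\}$ genuinely fits the framework of integrals of nilsequences and the conclusion of Proposition~\ref{proposition:integral-of-nilsequences} can be applied verbatim.
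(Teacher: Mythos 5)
Your proposal is correct and follows essentially the same route as the paper: approximate the $f_j$ by continuous functions, write $a(n)=\int_X f_0\otimes\cdots\otimes f_k\bigl((\tau^{s_0(n)},\ldots,\tau^{s_k(n)})(x,\ldots,x)\bigr)\,d\mu(x)$, use Leibman's result that a polynomial nilsequence is a nilsequence to see each $\psi_x$ is a nilsequence, and conclude via Proposition~\ref{proposition:integral-of-nilsequences}. The uniform-in-$x$ linearization you flag as a possible obstacle is not actually needed for integrability: for each fixed $n$ the map $x\mapsto\psi_x(n)$ is already continuous, which is all the definition of an integral of nilsequences requires.
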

    
    \begin{proof}
        Let $(X = G/\Gamma, \mu_X, \tau)$ be a nilsystem, $f_j \in L^{\infty}(\mu_X)$ and $s_j \in \mathbb{Z}[n]$. Let
        \[
            a(n) = \int_X T^{s_0(n)} f_0 \dots T^{s_k(n)} f_k \, d \mu
        \]
        
        By approximation we can assume $f_j$ is a continuous function on $X$ for all $j$. Then
        \begin{multline*}
            a(n) = \int_X f_0(\tau^{s_0(n) x} \dots f_k(\tau^{s_k(n) x} \, d \mu_X(x) =\\
            \int_X f_0 \otimes \dots \otimes f_k ((\tau^{s_0(n)}, \dots, \tau^{s_k(n)}) (x, x, \dots, x)) \, d \mu_X(x)
        \end{multline*}
        
        The function $F = f_0 \otimes \dots \otimes f_k$ is continuous on $X^{k+1}$. On the other hand, the sequence $(\tau^{s_0(n)}, \dots, \tau^{s_k(n)}) = g_0^{s_0(n)} \dots g_k^{s_k(n)}$ is a polynomial sequence on $G^{k+1}$ where $g_j = (1_G, \dots, 1_G, \tau_j, 1_G, \dots, 1_G)$. Hence $f_0 \otimes \dots \otimes f_k ((\tau^{s_0(n)}, \dots, \tau^{s_k(n)}) (x, x, \dots, x))$ is a polynomial nilsequence for all $x \in X$. By \cite{leib05-2}, a polynomial nilsequence is also a nilsequence (of higher degree of nilpotency). Therefore, $(a(n))$ is an integral of nilsequences. By Proposition \ref{proposition:integral-of-nilsequences}, the null component of $(a(n))$ is null along any EGEN sequence. Our proof finishes.
    \end{proof}
  
\subsection{In an ergodic system}

    \begin{lemma}   
    \label{lemma:nullity-along-good-sequence-in-ergodic-system-1}
        Let $(a(n))$ be a polynomial correlation in an ergodic system and $(r_n)$ be a polynomial-GPN sequence. Then there exists an $m \in \mathbb{N}$ such that if $(\tilde{a}(n))$ is the projection of $(a(n))$ on to $m$-step nilfactor $\mathcal{Z}_m$, then $a - \tilde{a}$ is a null sequence along $(n)$ and $(r_n)$. 
    \end{lemma}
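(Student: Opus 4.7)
The plan is to choose a single $m$ that controls both sequences simultaneously, and then estimate the difference in mean square via Cauchy--Schwarz and the characteristic factor property in the product system $X \times X$. Since $(n)$ itself is polynomial-GPN (by the Host-Kra and Leibman results cited in Section~\ref{subsection:good-sequences}) and $(r_n)$ is polynomial-GPN by hypothesis, I can select $m \in \mathbb{N}$ large enough that $\mathcal{Z}_m$ is characteristic for the tuple $(s_0(n), \ldots, s_k(n))$ along both $(n)$ and $(r_n)$ uniformly in the underlying ergodic system. Set $\tilde{f}_j = \mathbb{E}(f_j \mid \mathcal{Z}_m)$ and $g_j = f_j - \tilde{f}_j$; then $\tilde{a}(n) = \int \prod_j T^{s_j(n)} \tilde{f}_j \, d\mu$ is the projected correlation.

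Next I would apply a standard telescoping, $a(n) - \tilde{a}(n) = \sum_{j=0}^{k} A_j(n)$, where $A_j(n)$ is the correlation with $\tilde{f}_i$ in positions $i<j$, $g_j$ in position $j$, and $f_i$ in positions $i>j$. By Cauchy--Schwarz it suffices to show $\mathbb{E}_{n \in [N]} |A_j(n)|^2 \to 0$, and likewise with $r_n$ in place of $n$. Expanding the square as an integral over $X \times X$ gives
\[
    |A_j(n)|^2 = \int_{X \times X} \prod_i (T \times T)^{s_i(n)}(H_i \otimes \overline{H_i}) \, d(\mu \times \mu),
\]
with $H_i = \tilde{f}_i$ for $i < j$, $H_i = f_i$ for $i > j$, and $H_j = g_j$. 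Averaging over $n \in [N]$ and decomposing $\mu \times \mu$ into its ergodic components for $T \times T$, the characteristic factor property of $\mathcal{Z}_m$ applies in each component: since $\mathbb{E}(g_j \mid \mathcal{Z}_m) = 0$ and the nilfactor of any ergodic component of $X \times X$ is contained in $\mathcal{Z}_m(X) \otimes \mathcal{Z}_m(X)$, the conditional expectation of $g_j \otimes \overline{g_j}$ onto the component's nilfactor vanishes, forcing the averages to go to zero in the limit. The argument along $(r_n)$ is identical, using the $(r_n)$-half of the choice of $m$.

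The main obstacle I anticipate is the non-ergodicity of $(X \times X, \mu \times \mu, T \times T)$, which necessitates an ergodic decomposition. The key point is that a single $m$ has to control characteristic-factor behavior in every ergodic component simultaneously; this is precisely what the polynomial-GPN property delivers, because the $m$ it produces depends only on the polynomial tuple and not on the ambient system. Combined with the standard identification of the nilfactor of each ergodic component of the product system with a sub-object of $\mathcal{Z}_m(X) \otimes \mathcal{Z}_m(X)$, this propagates the orthogonality $\mathbb{E}(g_j \mid \mathcal{Z}_m) = 0$ through the ergodic decomposition and closes the argument.
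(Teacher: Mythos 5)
Your argument is correct and is essentially the paper's proof: the paper likewise fixes a single $m$ (it takes $\max\{m_1,m_2\}+1$, where $m_1,m_2$ come from the polynomial-GPN property of $(n)$ and of $(r_n)$) and then invokes ``the same proof as in Bergelson--Host--Kra, Corollary 4.5,'' which is precisely the telescoping / Cauchy--Schwarz / product-system / ergodic-decomposition chain you spell out, including the key facts that the $m$ produced by polynomial-GPN is uniform over ergodic systems and that the nilfactor of each ergodic component of $X\times X$ sits inside $\mathcal{Z}_m(X)\otimes\mathcal{Z}_m(X)$. The only detail to watch is the telescoped term in which $f_0$ itself is replaced by $g_0$: there the average is killed not by the characteristic-factor identity directly but by the measurability of the limit of the product averages with respect to the nilfactor, a point the Bergelson--Host--Kra argument handles in the same way.
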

    
    \begin{proof}
        Since both $(n)$ and $(r_n)$ are polynomial-GPN sequences, there exists $m_1$ and $m_2$ such that $\mathcal{Z}_{m_1}$ is characteristic for $(s_0(n), \dots, s_k(n))$ and $\mathcal{Z}_{m_2}$ is characteristic for $(s_0(r_n), \ldots, s_k(r_n))$. Let $m = \max \{m_1, m_2\} + 1$ and $\tilde{a}$ be the projection of $a$ on to $\mathcal{Z}_m$. Then with the same proof as in \cite[Corollary 4.5]{Bergelson_Host_Kra05}, we obtain the conclusion.
    \end{proof}

    \begin{proposition}
        \label{proposition:decomposition-in-ergodic-systems}
        The null component of a polynomial correlation in an ergodic system is null along any polynomial-good sequence.
    \end{proposition}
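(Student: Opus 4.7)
The plan is to combine Lemma \ref{lemma:nullity-along-good-sequence-in-ergodic-system-1} with Proposition \ref{proposition:decomposition-in-ergodic-nilsystems} via an approximation of $\mathcal{Z}_m$ by its nilsystem factors.

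First, by Lemma \ref{lemma:nullity-along-good-sequence-in-ergodic-system-1} pick $m \in \mathbb{N}$ and let $\tilde{a}$ be the projection of $a$ onto $\mathcal{Z}_m$, so that $a - \tilde{a}$ is null along both $(n)$ and $(r_n)$. By the uniqueness of the nil+null decomposition (Section \ref{subsection:backgroun-uniqueness-of-nil-null-decomposition}), it suffices to exhibit a decomposition $\tilde{a} = \tilde{\psi} + \tilde{\epsilon}$ with $\tilde{\epsilon}$ null along $(r_n)$: then $a = \tilde{\psi} + (\tilde{\epsilon} + (a - \tilde{a}))$ is the nil+null decomposition of $a$, whose null component is null along $(r_n)$ as desired.

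To handle $\tilde{a}(n) = \int_{\mathcal{Z}_m} g_0(\tau^{s_0(n)} z) \cdots g_k(\tau^{s_k(n)} z) \, d \mu(z)$ with $g_j = \mathbb{E}(f_j \mid \mathcal{Z}_m)$, I exploit that $\mathcal{Z}_m$ is an inverse limit of $m$-step nilsystems $Y_i$. Approximate each $g_j$ in $L^2$ by a continuous function $g_j^{(i)}$ factoring through $Y_i$, with $\|g_j^{(i)}\|_\infty \le \|g_j\|_\infty$ after truncation, and form the resulting polynomial correlation $\tilde{a}_i$ in $Y_i$. A telescoping estimate combined with H\"older's inequality and the $T$-invariance of $L^2$ norms yields $\|\tilde{a} - \tilde{a}_i\|_\infty \to 0$. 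Each $\tilde{a}_i$ is a polynomial correlation in the nilsystem $Y_i$, so by Proposition \ref{proposition:decomposition-in-ergodic-nilsystems} it admits a nil+null decomposition $\tilde{a}_i = \psi_i + \epsilon_i$ with $\epsilon_i$ null along $(r_n)$.

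Finally, let $\tilde{a} = \tilde{\psi} + \tilde{\epsilon}$ be the nil+null decomposition supplied by Leibman's theorem. The identity $(\tilde{\psi} - \psi_i) + (\tilde{\epsilon} - \epsilon_i) = \tilde{a} - \tilde{a}_i$, after taking Cesaro means along $(n)$ and using that null sequences have mean zero, yields $\mathbb{E}_n |(\tilde{\psi} - \psi_i)(n)| \le \|\tilde{a} - \tilde{a}_i\|_\infty$. The triangle inequality then gives
\[
\mathbb{E}_n |\tilde{\epsilon}(r_n)| \le \mathbb{E}_n |(\psi_i - \tilde{\psi})(r_n)| + \mathbb{E}_n |\epsilon_i(r_n)| + \|\tilde{a} - \tilde{a}_i\|_\infty,
\]
with the middle term vanishing by the choice of $\epsilon_i$. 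The hard part will be transferring the Cesaro mean estimate for the nilsequence $\psi_i - \tilde{\psi}$ from $(n)$ to $(r_n)$: I expect to decompose that mean along the residue classes modulo the number $d$ of connected components of the ambient nilsystem and to invoke the defining equidistribution of EGEN on each component, the expected loss being a multiplicative factor depending only on $d$. Once the right-hand side is shown to tend to $0$ as $i \to \infty$, the proposition follows.
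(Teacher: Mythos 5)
Your overall architecture matches the paper's: reduce to the nilfactor via Lemma \ref{lemma:nullity-along-good-sequence-in-ergodic-system-1}, approximate $\mathcal{Z}_m = \varprojlim Y_i$ by the finite-stage nilsystems so that the correlations $\tilde{a}_i$ converge uniformly to $\tilde{a}$, and apply Proposition \ref{proposition:decomposition-in-ergodic-nilsystems} at each stage. The gap is in your final limiting step. You only extract a Cesaro-mean bound $\limsup_N \mathbb{E}_{n \in [N]}|(\tilde{\psi}-\psi_i)(n)| \le \lVert \tilde{a}-\tilde{a}_i\rVert_\infty$ along $(n)$, and then propose to transfer it to $(r_n)$ by splitting into residue classes modulo the number $d$ of connected components of the ambient nilmanifold, accepting a multiplicative loss of order $d$. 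But $\psi_i$ lives on a nilmanifold that changes with $i$ (and $\tilde{\psi}$, being a uniform limit of basic nilsequences, has no single ambient nilmanifold at all), so the relevant $d = d_i$ is not bounded as $i \to \infty$. A bound of the form $\mathbb{E}_n|(\tilde{\psi}-\psi_i)(r_n)| \le d_i \lVert \tilde{a}-\tilde{a}_i\rVert_\infty$ need not tend to zero, so the argument as written does not close.

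The repair is to upgrade the Cesaro-mean estimate to a sup-norm estimate, which makes the transfer to $(r_n)$ trivial and is exactly what the paper does (following \cite[Section 7.4]{Bergelson_Host_Kra05}). The nilsequence $\tilde{\psi}-\psi_i$ equals $(\tilde{a}-\tilde{a}_i) - (\tilde{\epsilon}-\epsilon_i)$; by the syndetic-return property used for uniqueness of the decomposition in Section \ref{subsection:backgroun-uniqueness-of-nil-null-decomposition}, the set where $|\tilde{\psi}-\psi_i|$ is within $\delta$ of its supremum has positive lower density, while the set where $|(\tilde{\epsilon}-\epsilon_i)(n)| > \delta$ has zero density, so these sets meet and one gets $\lVert \tilde{\psi}-\psi_i\rVert_\infty \le \lVert \tilde{a}-\tilde{a}_i\rVert_\infty$. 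Equivalently, $\epsilon_i \to \tilde{\epsilon}$ uniformly, and a uniform limit of sequences null along $(r_n)$ is null along $(r_n)$. With that substitution your proof coincides with the paper's.
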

    
    \begin{proof}
        Let $(a(n))$ be a polynomial correlation in an ergodic system and $(r_n)$ be a polynomial-good sequence. The goal is to show the null component of $(a(n))$ is null along $(r_n)$. Let $ (\tilde{a}(n))$ as in Lemma \ref{lemma:nullity-along-good-sequence-in-ergodic-system-1}. Then by this lemma, $a - \tilde{a}$ is null along $(n)$ and $(r_n)$. 
        
        On the other hand, $\tilde{a}$ is a polynomial correlation arising from a nilfactor $Y$. $Y$ is an inverse limit of nilsystems, say $Y = \varprojlim Y_l$. Let $\tilde{a}_l$ be the projection of $\tilde{a}$ on to $Y_l$ for each $l \in \mathbb{N}$. By Proposition \ref{proposition:decomposition-in-ergodic-nilsystems}, $\tilde{a}_l$ can be written as $\tilde{a}_l = \psi_l + \epsilon_l$ where $\psi_l$ is a nilsequence and $\epsilon_l$ is null along any EGEN sequence, in particular along $(n)$ and $(r_n)$. Note that $\tilde{a}_l$ converges to $\tilde{a}$ uniformly as $l \to \infty$ since $Y = \varprojlim Y_l$. Hence it is easy to see $\psi_l$ converges to a nilsequence $\psi$ uniformly (see \cite[Section 7.4]{Bergelson_Host_Kra05}). Likewise, $\epsilon_l$ converges uniformly to a null sequence $\epsilon$ which is also null along $(r_n)$. Now $\tilde{a}$ can be written as $\tilde{a} = \psi + \epsilon$. Hence it has nil+null decomposition, and its null component is null along $(r_n)$.  
          
        In summary, we just show that both $a - \tilde{a}$ and the null component of $\tilde{a}$ is null along $(r_n)$. Therefore the null component of $a = \tilde{a} + (a - \tilde{a})$ is null along $(r_n)$. Our proof finishes.
    \end{proof}
    
\subsection{In a general measure preserving system}
    \begin{proposition}
        \label{proposition:nullity-along-polynomial-good-sequence-general-system}
        The null component of a polynomial correlation is null along any polynomial-good sequence.
    \end{proposition}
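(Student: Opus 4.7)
The plan is to reduce from the ergodic case already handled in Proposition~\ref{proposition:decomposition-in-ergodic-systems} to a general system via the ergodic decomposition, then combine this with the refined statement about integrals of nilsequences (Proposition~\ref{proposition:integral-of-nilsequences}). This is precisely the pattern Leibman uses in \cite{leib15} to pass from ergodic to non-ergodic systems for the plain nil+null decomposition; I will track through the same argument making sure that the stronger ``null along $(r_n)$'' property survives at each step.

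Concretely, let $(X, \mu, T)$ be the given system with ergodic decomposition $\mu = \int \mu_t \, d\lambda(t)$, and fix a polynomial-good sequence $(r_n)$. Define
\[
a_t(n) = \int_X f_0(T^{s_0(n)} x) \cdots f_k(T^{s_k(n)} x) \, d\mu_t(x),
\]
so that $a(n) = \int a_t(n) \, d\lambda(t)$. Since each $\mu_t$ is $T$-ergodic, Proposition~\ref{proposition:decomposition-in-ergodic-systems} gives a nil+null decomposition $a_t = \psi_t + \epsilon_t$ in which $\epsilon_t$ is null along $(r_n)$. A measurable-selection argument, exactly as in \cite{leib15}, makes $t \mapsto (\psi_t(n), \epsilon_t(n))$ jointly measurable, and both families are uniformly bounded by a constant depending only on $\prod_j \lVert f_j \rVert_\infty$.

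Setting $\Psi(n) = \int \psi_t(n) \, d\lambda(t)$ and $E(n) = \int \epsilon_t(n) \, d\lambda(t)$, we obtain $a = \Psi + E$. Now $\Psi$ is an integral of nilsequences, so Proposition~\ref{proposition:integral-of-nilsequences} yields a nil+null decomposition $\Psi = \Psi_{\mathrm{nil}} + \Psi_{\mathrm{null}}$ with $\Psi_{\mathrm{null}}$ null along every EGEN sequence, and in particular along $(r_n)$. For the remainder term $E$, Fubini's theorem together with the dominated convergence theorem (justified by the uniform bound on the $\epsilon_t$) gives
\[
\frac{1}{N} \sum_{n=1}^N |E(r_n)| \;\leq\; \int \frac{1}{N} \sum_{n=1}^N |\epsilon_t(r_n)| \, d\lambda(t) \;\xrightarrow[N \to \infty]{}\; 0,
\]
so $E$ is itself null along $(r_n)$. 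Thus $a = \Psi_{\mathrm{nil}} + (\Psi_{\mathrm{null}} + E)$ is a nil+null decomposition of $a$ whose null part is null along $(r_n)$, and by uniqueness of such decompositions (Section~\ref{subsection:backgroun-uniqueness-of-nil-null-decomposition}) this must be \emph{the} nil+null decomposition of $a$.

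The step I expect to be most delicate is the joint measurability and uniform boundedness of $t \mapsto (\psi_t, \epsilon_t)$, because the decomposition produced by Proposition~\ref{proposition:decomposition-in-ergodic-systems} is not obviously canonical in $t$; this is exactly the measurable-selection issue Leibman has to deal with in the non-refined ergodic-decomposition step, and I would import his argument verbatim. Everything else is a straightforward interchange of limit and integral, provided one has the uniform bound in hand. The linear analogue for linear-good sequences and linear correlations follows by the same argument, replacing each appeal to the polynomial version of Proposition~\ref{proposition:decomposition-in-ergodic-systems} by its linear counterpart.
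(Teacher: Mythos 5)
Your argument is correct and follows essentially the same route as the paper: ergodic decomposition, the ergodic-case proposition applied fiberwise, Proposition~\ref{proposition:integral-of-nilsequences} for the integral of the nil components, and an interchange of integral and average for the integral of the null components. Your added care about measurable selection, the uniform bound justifying dominated convergence, and the explicit appeal to uniqueness of the nil+null decomposition are all sound refinements of the paper's more terse presentation, not departures from it.
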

    
    \begin{proof}
        Let $\mu = \int_{\Omega} \mu_{\omega} \, d P(\omega)$ be the ergodic decomposition of $\mu$ with respect to $T$. Then $a(n) = \int_{\Omega} a_{\omega}(n) \, d P(\omega)$ where
        \[
            a_{\omega}(n) = \int_X T^{s_0(n)} f_0 \dots T^{s_k(n)} f_k \, d \mu_{\omega}
        \]
        
        For almost every $\omega \in \Omega$, the system $(X, \mu_{\omega}, T)$ is ergodic. Hence by Proposition \ref{proposition:decomposition-in-ergodic-systems}, the null component of $a_{\omega}$ is null along any polynomial-good sequence. To be precise, $a_{\omega} = \psi_{\omega} + \epsilon_{\omega}$ where $\psi_{\omega}$ is a nilsequence and $\epsilon_{\omega}$ is a null sequence along every polynomial-good sequence. Thus $a = \int_{\Omega} \psi_{\omega} \, d P + \int_{\Omega} \epsilon_{\omega} \, d P$.
        
        On one hand, the sequence $\int_{\Omega} \psi_{\omega} \, d P$ is an integral of nilsequences. Hence its null component is null along any polynomial-good sequence by Proposition \ref{proposition:integral-of-nilsequences}. On the other hand $\int_{\Omega} \epsilon_{\omega} \, d P$ is obviously null along any polynomial-good sequence (integral of null sequences is still a null sequence). Therefore, the null component of $(a(n))$ is null along any polynomial-good sequence. Our proof finishes.
    \end{proof}
    
    \section{Polynomials of primes are polynomial-good sequences}
    \label{section:polynomial-of-primes-is-good}
    
In this section, we show that polynomiasl of primes are a polynomial-good sequences. The proof has two parts. One is to show such sequences are polynomial-GPN. The other one is to show they are EGEN. Some notations are needed before going into the details.

The \emph{modified von Mangoldt function} is defined to be $\Lambda'(n) = \begin{cases} \log n \; \mbox{ ($n \in \mathbb{P}$)} \\ 0 \;\; (n \in \mathbb{N} \setminus \mathbb{P}) \end{cases}$

The \emph{Euler totient function} $\phi(n)$ is the number of positive integer not greater than $n$ and relatively prime to $n$.

For $r < M \in \mathbb{N}$, define
\[
    \Lambda'_{M, r} (n) = \frac{\phi(M)}{M} \Lambda'(Mn + r)    
\]

For $\omega \in \mathbb{N}$, define $W = \prod_{p \in \mathbb{P}, p < \omega} p$.
    
The symbol $o_{\omega \rightarrow \infty}(1)$(or $o_{N \rightarrow \infty}(1)$) represents a function of $\omega$ that approaches zero as $\omega \rightarrow \infty$($N \rightarrow \infty$ respectively). Furthermore, $o_{\omega, N \rightarrow \infty}(1)$ is a function of $\omega$ and $N$ such that for a fixed $\omega$, the function approaches zero as $N \rightarrow \infty$.

For two sequence $a, b$ by writing $a(N) \sim b(N)$ we mean  $\lim_{N \rightarrow \infty} a(N)/b(N) = 1$. 

\subsection{Polynomial of primes are polynomial-GPN}
Let $Q(n), s_j(n) \in \mathbb{Z}[n]$ for $1 \leq j \leq k$. The results of Host-Kra \cite{hk05-2} and Leibman \cite{leib05-3} show that there exists some $m \in \mathbb{N}$ such that $m$-step nilfactor $\mathcal{Z}_m$ is characteristic for $(s_1(Q(n)), \dots, s_k(Q(n))$. Moreover $m$ only depends on the degrees of $s_j(Q(n))$. Now we prove with the same $m$, $\mathcal{Z}_m$ is characteristic for $(s_1(Q(p_n)), \dots, s_k(Q(p_n))$. Denote
\[
    a(n) = T^{s_1(Q(n))} f_0 \dots T^{s_k(Q(n))} f_k \in L^{\infty}(\mu)
\]

\[
    B_{W, r}(N) = \mathbb{E}_{n \in [N]} a(Wn +r)
\]

The key ingredient in our proof is a proposition from Frantzikinakis-Host-Kra \cite{Frantzikinakis_Host_Kra13} that compares ergodic averages along primes to the averages along integers.

\begin{lemma}[Frantzikinakis-Host-Kra {\cite[Proposition 3.6]{Frantzikinakis_Host_Kra13}}]
    \label{lemma:1}
    \[
        \max_{r < W, (r,W) = 1} \left \Vert \mathbb{E}_{n \in [N]} \Lambda'_{W, r}(n) a(Wn+r) - B_{W, r}(N) \right \Vert_{L^2(\mu)} = o_{N \to \infty, \omega}(1) + o_{\omega \to \infty}(1)
    \]
\end{lemma}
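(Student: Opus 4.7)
The plan is to combine two inputs: (i) a characteristic factor reduction that replaces the $L^2$-valued sequence $a(n)$ by its projection onto an $m$-step nilfactor $\mathcal{Z}_m$, modulo an error controlled by a Gowers-type uniformity norm; and (ii) a quantitative correlation estimate of Green--Tao type, saying that after the W-trick the weight $\Lambda'_{W,r}(n)$ correlates with nilsequences in essentially the same way as the constant weight $1$. This matches the philosophy of \cite{Frantzikinakis_Host_Kra13} where such a transference between $(p_n)$-averages and $(n)$-averages is the key technical tool.

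First I would invoke the Host--Kra--Leibman theorem guaranteeing some $m$ (depending only on the degrees of $s_j \circ Q$) such that $\mathcal{Z}_m$ is characteristic for the tuple $(s_1(Q(n)), \dots, s_k(Q(n)))$, and replace each $f_j$ by $\mathbb{E}(f_j | \mathcal{Z}_m)$. Crucially, the errors arising from this replacement must be small not only in the unweighted Ces\`aro average $B_{W,r}(N)$ but also in the weighted version $\mathbb{E}_{n \in [N]} \Lambda'_{W,r}(n) a(Wn+r)$. This is the technical heart of the W-trick: the Green--Tao majorant estimates show that $\Lambda'_{W,r}$ is pointwise dominated by a pseudorandom majorant, which allows a generalized von Neumann inequality to transfer Gowers-uniformity smallness into the weighted setting at the cost of an $o_{\omega \to \infty}(1)$ error.

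After this reduction, both averages become (by Fubini, together with the principle of \cite{leib05-2} that a polynomial orbit on a nilmanifold produces a nilsequence of higher step) integrals over $x \in X$ of basic $m$-step nilsequences of the form $\psi_x(n) = F(\tau^{s_1(Q(n))}x, \dots, \tau^{s_k(Q(n))}x)$. The claim then reduces to showing
\[
    \mathbb{E}_{n \in [N]} \bigl(\Lambda'_{W,r}(n) - 1 \bigr) \psi_x(Wn+r) = o_{N \to \infty, \omega}(1) + o_{\omega \to \infty}(1),
\]
uniformly in $x \in X$ and in $r$ with $(r,W)=1$. This last bound is exactly the quantitative equidistribution theorem for the $W$-tricked von Mangoldt function against nilsequences proved by Green--Tao \cite{gt10, gt12}, whose proof relies on the inverse theorem for Gowers norms \cite{gtz}. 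Integrating the pointwise estimate back to an $L^2(\mu)$-norm bound via Cauchy--Schwarz on the square of the original expression (following the PET-style van der Corput manipulations that reduce $L^2$-smallness to scalar correlation estimates) then gives the lemma.

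I expect the main obstacle to be the uniformity in $r$ over coprime residue classes modulo $W$, since $W = \prod_{p < \omega} p$ itself grows with $\omega$. A bound for each fixed $r$ is much easier; the uniform $\max_r$ requires the quantitative Green--Tao estimates to be polynomially uniform in $W$, and this is precisely where one must appeal to the full strength of the Möbius--nilsequence orthogonality conjecture (now theorem). A secondary technical point is ensuring that the passage from the $U^s$-smallness of $\Lambda'_{W,r}-1$ to smallness of the correlation against $\psi_x$ is uniform in the continuous parameter $x$; this is handled by a standard compactness/uniform approximation argument on the nilmanifold, using that $F$ is continuous on a compact space.
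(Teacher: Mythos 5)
You should first note that the paper does not prove this statement at all: it is imported verbatim as Proposition 3.6 of Frantzikinakis--Host--Kra \cite{Frantzikinakis_Host_Kra13}, so there is no internal proof to compare against. Judged on its own merits, your proposal names the two correct ingredients --- a generalized von Neumann/PET estimate and the Green--Tao(--Ziegler) uniformity of the $W$-tricked von Mangoldt function --- but assembles them in a way that leaves the hardest step asserted rather than proved. The actual argument is shorter and does not pass through nilfactors: one shows directly, by van der Corput/PET with the $f_j$ treated as arbitrary bounded functions, that
\[
\Bigl\Vert \mathbb{E}_{n \in [N]} \bigl(\Lambda'_{W,r}(n) - 1\bigr)\, a(Wn+r) \Bigr\Vert_{L^2(\mu)} \ll \bigl\Vert \Lambda'_{W,r} - 1 \bigr\Vert_{U^s[N]}^{c} + o_{N \to \infty;\,\omega}(1)
\]
for a suitable $s$ depending only on the polynomials, and then invokes the theorem that $\max_{r<W,\,(r,W)=1}\Vert \Lambda'_{W,r}-1\Vert_{U^s[N]} = o_{N\to\infty;\,\omega}(1) + o_{\omega\to\infty}(1)$. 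Since $\omega$ is fixed before $N\to\infty$, the maximum over the finitely many residues $r$ is only an issue for the $o_{\omega\to\infty}(1)$ term, and that uniformity is built into the Gowers-norm statement; no appeal to M\"obius--nilsequence orthogonality is required for this lemma.

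The gap in your route is the first reduction: replacing $f_j$ by $\mathbb{E}(f_j\mid\mathcal{Z}_m)$ \emph{inside the weighted average} is itself a transference statement of essentially the same depth as the lemma (it requires controlling the weighted average by a Host--Kra seminorm of one factor, under an unbounded weight dominated by a pseudorandom majorant), and you assert it in one sentence rather than derive it. Even granting it, you then face further unaddressed approximations: $\mathcal{Z}_m$ is only an inverse limit of nilsystems and the conditional expectations are merely measurable, so you must approximate by continuous functions on a genuine nilmanifold and control those approximation errors again under the weight $\Lambda'_{W,r}$; and your final ``Cauchy--Schwarz on the square'' step is unnecessary, since a bound on $\mathbb{E}_{n\in[N]}(\Lambda'_{W,r}(n)-1)\psi_x(n)$ uniform in $x$ already gives an $L^\infty$, hence $L^2$, bound. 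In short: keep the generalized von Neumann plus Gowers-uniformity skeleton, drop the nilfactor/nilsequence detour, and the argument closes.
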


We are ready for the main result of this section.

\begin{proposition}
    \label{proposition:poly-prime-GPN}
    For any $Q \in \mathbb{Z}[n]$ non-constant, the sequence $(Q(p_n))$ is polynomial-GPN.
\end{proposition}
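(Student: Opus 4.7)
The plan is to combine the $W$-trick with Lemma~\ref{lemma:1} to transfer the characteristic factor property from integer averages to prime averages. By the results of Host-Kra~\cite{hk05-2} and Leibman~\cite{leib05-3} cited just above, fix $m \in \mathbb{N}$, depending only on the degrees of $s_1 \circ Q, \ldots, s_k \circ Q$, such that $\mathcal{Z}_m$ is characteristic for $(s_1(Q(n)), \ldots, s_k(Q(n)))$. Because the polynomials $s_j(Q(Wn+r))$ have the same degrees for any fixed $W, r \in \mathbb{Z}$, the same $m$ also works for $(s_1(Q(Wn+r)), \ldots, s_k(Q(Wn+r)))$ viewed as polynomials in $n$. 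Let $\tilde a(n) = T^{s_1(Q(n))} \mathbb{E}(f_0|\mathcal{Z}_m) \cdots T^{s_k(Q(n))} \mathbb{E}(f_k|\mathcal{Z}_m)$ and $b(n) = a(n) - \tilde a(n)$; the goal reduces to showing $\mathbb{E}_{n \in [N]} b(p_n) \to 0$ in $L^2(\mu)$.

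Fix $\omega$ and set $W = \prod_{p \in \mathbb{P},\, p < \omega} p$. All primes $p_n > \omega$ are coprime to $W$, and by the prime number theorem in arithmetic progressions they distribute equally in density among the $\phi(W)$ reduced residue classes modulo $W$. Using partial summation to remove the $\log$ weight built into $\Lambda'$, for an appropriate $N' \sim p_N/W$ one obtains
\[
\mathbb{E}_{n \in [N]} b(p_n) = \frac{1}{\phi(W)} \sum_{\substack{0 < r < W \\ (r,W) = 1}} \mathbb{E}_{n \in [N']} \Lambda'_{W,r}(n)\, b(Wn+r) + o_{\omega,\, N \to \infty}(1)
\]
in $L^2(\mu)$. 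Applying Lemma~\ref{lemma:1} separately to $a$ and $\tilde a$ (each of which has the form required by the lemma) replaces each weighted inner average by $\mathbb{E}_{n \in [N']} b(Wn+r)$ up to an $L^2$ error of $o_{\omega,\, N \to \infty}(1) + o_{\omega \to \infty}(1)$. Since $\mathcal{Z}_m$ is characteristic for $(s_j(Q(Wn+r)))_j$ along integers, each $\mathbb{E}_{n \in [N']} b(Wn+r) \to 0$ in $L^2(\mu)$ as $N \to \infty$ for fixed $W, r$. Combining these estimates yields
\[
\limsup_{N \to \infty} \left\| \mathbb{E}_{n \in [N]} b(p_n) \right\|_{L^2(\mu)} = o_{\omega \to \infty}(1),
\]
and letting $\omega \to \infty$ gives the desired conclusion.

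The main obstacle I anticipate is the first displayed equation: the partial summation that converts the unweighted prime average into the $\Lambda'_{W,r}$-weighted integer average must be carried out carefully, handling the $\log p$ factor in $\Lambda'$ via Abel summation and the prime number theorem in arithmetic progressions, with an $L^2$ error controlled uniformly in $r$ for each fixed $\omega$ as $N \to \infty$. A secondary point to verify is the stability of the characteristic level $m$ under the affine substitution $n \mapsto Wn + r$, which is automatic since the Host-Kra and Leibman sufficient conditions depend only on the degrees of the polynomials involved.
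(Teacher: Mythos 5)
Your proof is correct and follows essentially the same route as the paper: the $W$-trick, Lemma \ref{lemma:1}, and the observation that the characteristic level $m$ depends only on the degrees of the polynomials and hence survives the substitution $n \mapsto Wn+r$. The only cosmetic difference is that the paper links the prime average to the $\Lambda'$-weighted average by identifying $\mathbb{E}_{(r,W)=1}\mathbb{E}_{n\in[N]}\Lambda'_{W,r}(n)a(Wn+r)$ with $\mathbb{E}_{n\in[WN]}\Lambda'(n)a(n)$ and citing the Wooley--Ziegler and Frantzikinakis--Host--Kra convergence results so it can take honest limits, whereas you perform the comparison by Abel summation and work with a $\limsup$, which is equally valid for sequences bounded in $L^{\infty}(\mu)$.
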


\begin{proof}
    With the notation as before, assume $\mathbb{E}(f_j|\mathcal{Z}_m) = 0$ for some $j$. We need to prove 
    \[
        \lim_{N \to \infty} \mathbb{E}_{[N]} a(p_n) = 0 \;\;\; \mbox{ in } L^2(\mu). 
    \]
    
    From Lemma \ref{lemma:1}, taking average along $r < W, (r,W) = 1$, we get
    \begin{multline}
    \label{equation:poly-prime-1}
        \left \Vert \mathbb{E}_{(r,W) =1} \mathbb{E}_{n \in [N]} \Lambda'_{\omega, r}(n) a(Wn+r) - \mathbb{E}_{(r,W) =1} B_{\omega, r}(N) \right \Vert_{L^2(\mu)} = \\ o_{\omega, N \to \infty}(1) + o_{\omega \to \infty}(1)
    \end{multline}
    
    Note that 
    \[
        \mathbb{E}_{(r,W) = 1} \mathbb{E}_{n \in [N]} \Lambda'_{\omega, r}(n) a(Wn+r) = \mathbb{E}_{n \in [WN]} \Lambda'(n) a(n)
    \]
    
    Hence (\ref{equation:poly-prime-1}) becomes
    \begin{equation}
    \label{equation:poly-prime-1-2}
        \left \Vert \mathbb{E}_{n \in [WN]} \Lambda'(n) a(n) - \mathbb{E}_{(r,W) =1} B_{\omega, r}(N) \right \Vert_{L^2(\mu)} = o_{\omega, N \to \infty}(1) + o_{\omega \to \infty}(1)
    \end{equation}
    
    And since $\mathcal{Z}_m$ is characteristic for $(s_1(Q(Wn + r)), \dots, s_k(Q(Wn+r)))$ for any $W$ and $r$ (remember that $m$ only depends on the degrees of the polynomials), we have $\lim_{N \to \infty} B_{\omega, r}(N) = 0$ in $L^2(\mu)$. Hence
    \[
        \lim_{N \to \infty} \mathbb{E}_{(r,W) =1} B_{\omega, r}(N) = 0 \;\; \mbox{ in } L^2(\mu).
    \]
    
    On the other hand, by results of Wooley-Ziegler \cite{wz12}, Frantzikinakis-Host-Kra \cite{Frantzikinakis_Host_Kra13}, the limit
    \[
        \lim_{N \to \infty} \mathbb{E}_{[N]} \Lambda'(n) a(n)
    \]
    exists in $L^2(\mu)$ (equal to $ \lim_{N \to \infty} \mathbb{E}_{[N]} a(p_n)$). Call that limit $F \in L^2(\mu)$. 
    
    Then for any $W \in \mathbb{N}$, 
    \[
        F = \lim_{N \to \infty} \mathbb{E}_{[WN]} \Lambda'(n) a(n).
    \]
    
    Taking limit as $N \to \infty$ in (\ref{equation:poly-prime-1-2}), we get
    \begin{equation}
        \label{equation:poly-prime-2}
        \left \Vert F - 0 \right \Vert_{L^2(\mu)} = o_{\omega \to \infty}(1).
    \end{equation}
    
    The left hand side of (\ref{equation:poly-prime-2}) no longer depends on $\omega$. Let $\omega \to \infty$, we get $F = 0$ in $L^2(\mu)$. This finishes our proof. 
\end{proof}
\subsection{Polynomial of primes is EGEN}

We need the following proposition by Green and Tao.

\begin{comment}
\begin{proposition}[Green-Tao{\cite[Proposition 10.2]{gt10}}]
    \label{prop:green-taomain}
    For $N \in \mathbb{N}$, define $\omega(N) = \log \log \log N$ and $W(N) = \prod_{p \in \mathbb{P}, p < \omega(N)} p$. Suppose $X = G/\Gamma$ is a $k$-step nilmanifold where $G$ is connected and simply connected. Endow $X$ with a smooth Riemannian metric $d_{G/\Gamma}$. For any $g \in G$, $x \in X$ and Lipschitz function $F$ on $X$ with Lipschitz constant $M$, 
    \[
        \max_{b < W(N), (b,W(N)) =1} \left| \mathbb{E}_{n \in \mathbb{N}} \left( \frac{\phi(W(N))}{W(N)} \Lambda'(W(N) n + b) -1 \right) F(g^n x) \right| = o_{N \rightarrow \infty}(1)
    \]
    where $o_{N \rightarrow \infty}(1)$ term depends on $X$ and $M$ but does not depend on $F$, $g$ or $x$.
\end{proposition}

As discussed in Section \ref{subsec:nilseq}, any basic nilsequence can be seen as arising from a nilmanifold $X' = G'/\Gamma'$ where $G'$ is connected and simply-connected. The hypothesis that $G$ is connected and simply connected in Proposition \ref{prop:green-taomain} can be lifted. By \cite[Thm. B* Proof]{leib05-2}, for every basic nilsequence $(F(g^n x))_{n \in \mathbb{Z}}$ and any $Q \in \mathbb{Z}[n]$, there exists a basic nilsequence $(\tilde{F}(\tilde{g}^n \tilde{x}))_{n \in \mathbb{Z}}$ such that $F(g^{Q(n)} x) = \tilde{F}(\tilde{g}^n \tilde{x})$ for all $n \in \mathbb{Z}$. Furthermore, since Lipschitz functions are dense in $C(X)$, we have a corollary.
\end{comment}

\begin{proposition}[Green-Tao {\cite[Theorem 7.1]{gt12}}]
    \label{proposition:green-tao1}
    For sufficiently large $\omega \in \mathbb{N}$, define $W = W(\omega) = \prod_{p \in \mathbb{P}, p < \omega} p$. Suppose $(X = G/\Gamma, g)$ is a nilsystem, $x \in X$ and $F \in C(X)$. Then
    \[
        \max_{b < W, (b,W) = 1} \left| \lim_{N \rightarrow \infty} \mathbb{E}_{n \in [N]} \left( \Lambda'_{W,b}(n) - 1 \right) F(g^{Q(n)} x ) \right| = o_{\omega \rightarrow \infty}(1)
    \]
\end{proposition}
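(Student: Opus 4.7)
The statement is the $W$-tricked equidistribution estimate of Green and Tao, so the plan is not to reprove it from scratch but to record which form of their machinery we are invoking and, where needed, to reduce our setup to a version of it that is already available in the literature. Concretely, Theorem 7.1 of \cite{gt12} gives a bound of exactly the shape asserted for linear orbits on a connected, simply connected nilmanifold tested against Lipschitz functions, uniformly in residues coprime to $W$; our task is therefore to align our setup with that one.

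First, since Lipschitz functions are uniformly dense in $C(X)$ and the multiplier $\Lambda'_{W,b}(n) - 1$ has uniformly bounded $L^1$ average (this is the content of the $W$-trick, via Siegel--Walfisz), an $\varepsilon$-Lipschitz approximation of $F$ replaces the continuity hypothesis by a Lipschitz one with an additive error $O(\varepsilon)$; letting $\varepsilon \to 0$ at the end disposes of it. Second, as discussed in Section \ref{subsec:nilseq}, we may embed $X = G/\Gamma$ into a larger nilmanifold $X' = G'/\Gamma'$ with $G'$ connected and simply connected, and extend $F$ to a continuous (and then Lipschitz-approximate) function $F'$ so that $F(g^n x) = F'(g'^n x')$ for all $n$. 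This removes the connectedness hypothesis on the underlying group. Third, to convert the polynomial orbit $n \mapsto g'^{Q(n)} x'$ into a linear orbit, I would invoke Leibman's reduction \cite[Theorem B* proof]{leib05-2}: there exist a nilmanifold $\tilde{X} = \tilde{G}/\tilde{\Gamma}$ with $\tilde{G}$ still connected and simply connected, an element $\tilde{g} \in \tilde{G}$, a point $\tilde{x} \in \tilde{X}$, and a Lipschitz function $\tilde{F}$ with $F'(g'^{Q(n)} x') = \tilde{F}(\tilde{g}^n \tilde{x})$ for every $n$. At this point the Green--Tao linear-orbit equidistribution result applies directly to $\tilde{F}(\tilde{g}^n \tilde{x})$ and yields the desired $o_{\omega \to \infty}(1)$ estimate.

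The main point to be careful about is the uniformity in $b$ inside the maximum. In Green and Tao's statement the $o_{\omega \to \infty}(1)$ error depends on the ambient nilmanifold, on Lipschitz norms, and on the orbit data $\tilde{g}, \tilde{x}$, but is uniform over residues $b$ coprime to $W$, which is precisely what our maximum requires; since none of the auxiliary data produced by the three reductions above depend on $b$, the bound survives intact. A secondary concern is the existence of the inner $\lim_{N \to \infty}$, but this is guaranteed either by quoting the limit form of the Green--Tao estimate or, failing that, by replacing $\lim$ with $\limsup$ throughout; since the subsequent applications of this proposition only use the $\limsup$, either reading is sufficient. If the version of Theorem 7.1 in \cite{gt12} is stated directly for polynomial orbits and continuous test functions, the reduction chain collapses and the proof is simply a citation.
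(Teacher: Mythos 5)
Your proposal is correct and matches the paper's own justification, which is given in the remark following the proposition: the same three reductions (Lipschitz density in $C(X)$, embedding into a connected and simply connected nilpotent group as in Section \ref{subsec:nilseq}, and Leibman's conversion of polynomial nilsequences into linear ones via \cite[Thm.\ B* Proof]{leib05-2}) are exactly what the paper uses to deduce the stated form from Green--Tao's Theorem 7.1. Your additional remarks on uniformity in $b$ and on the existence of the inner limit are sensible but do not change the argument.
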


\begin{remark*}
    Green-Tao's version of Proposition \ref{proposition:green-tao1} is slightly different to the one we introduce here. In Green-Tao's, in the place of $F(g^{Q(n)}) x$ is a Lipschitz nilsequence that arises from a connected and simply-connected nilpotent group $G$. However, it is immediate to see Green-Tao's version implies ours. First, as discussed in Section \ref{subsec:nilseq}, any basic nilsequence can be seen as arising from a nilmanifold whose Lie group is connected and simply-connected. Second, every polynomial nilsequence is a nilsequence (See \cite[Thm. B* Proof]{leib05-2}). Lastly, we can take $F$ to be any continuous function since Lipschitz functions are dense in $C(X)$.
    
\begin{comment} 
    First, Green-Tao only works with connected and simply-connected nilpotent group $G$. However, as discussed in Section \ref{subsec:nilseq}, any basic nilsequence can be seen as arising from a nilmanifold $X' = G'/\Gamma'$ where $G'$ is connected and simply-connected. The hypothesis that $G$ is connected and simply connected in Proposition \ref{prop:green-taomain} can be lifted. Second, by \cite[Thm. B* Proof]{leib05-2}, for every basic nilsequence $(F(g^n x))_{n \in \mathbb{Z}}$ and any $Q \in \mathbb{Z}[n]$, there exists a basic nilsequence $(\tilde{F}(\tilde{g}^n \tilde{x}))_{n \in \mathbb{Z}}$ such that $F(g^{Q(n)} x) = \tilde{F}(\tilde{g}^n \tilde{x})$ for all $n \in \mathbb{Z}$. Furthermore, since Lipschitz functions are dense in $C(X)$, so it is true for any continuous function.
\end{comment}
\end{remark*}

We have a corollary.

\begin{corollary} 
    \label{corollary:green-tao-d}
    Fix $d \in \mathbb{N}$. For sufficiently large $\omega \in \mathbb{N}$, define $W = W(\omega) = \prod_{p \in \mathbb{P}, p < \omega} p$. Suppose $(X = G/\Gamma, g)$ is a nilsystem, $x \in X$ and $F \in C(X)$. Then 
        \[
            \max_{b < dW, (b,dW) = 1} \left| \lim_{N \rightarrow \infty} \mathbb{E}_{n \in [N]} \left( \Lambda'_{dW,b}(n) - 1 \right) F(g^{Q(n)} x) \right| = o_{\omega \rightarrow \infty}(1)
        \]
\end{corollary}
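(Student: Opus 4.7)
The plan is to reduce Corollary \ref{corollary:green-tao-d} directly to Proposition \ref{proposition:green-tao1} by absorbing the extra factor $d$ into the summation variable. Assume $\omega$ is large enough that every prime dividing $d$ lies below $\omega$; then $d \mid W$, and in particular $\phi(dW)/dW = \phi(W)/W$. For each $b$ with $b < dW$ and $(b,dW) = 1$, write uniquely $b = qW + b'$ with $0 \le q < d$, $0 \le b' < W$ and $(b', W) = 1$. A direct computation yields the key identity
\[
\Lambda'_{dW, b}(n) \;=\; \tfrac{\phi(W)}{W}\,\Lambda'\bigl(W(dn+q) + b'\bigr) \;=\; \Lambda'_{W, b'}(dn + q),
\]
which rewrites the average in the corollary as $\mathbb{E}_{n \in [N]}\bigl(\Lambda'_{W, b'}(dn+q) - 1\bigr) F(g^{Q(n)} x)$.

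Next I change variable to $m = dn + q$ and broaden the sum into a full average over $m \in [dN]$ weighted by $\mathbbm{1}_{m \equiv q \pmod d}$; the discrepancy is $o_{N\to\infty}(1)$. The subtle point is re-expressing $F(g^{Q(n)} x)$ as a basic nilsequence in the new variable $m$. By the reduction described in Section \ref{subsec:nilseq} I may assume the underlying Lie group is connected and simply connected, so the exponential map is a global diffeomorphism onto the Lie algebra. Converting $F(g^{Q(n)} x)$ first to a basic nilsequence $\tilde F(\tilde g^n \tilde x)$ on a nilmanifold $\tilde X$ via Leibman's polynomial-to-linear reduction \cite[Thm.~B*]{leib05-2}, I set $\hat g := \exp(\tfrac{1}{d}\log \tilde g)$ and $\hat x := \hat g^{-q} \tilde x$; then $H(m) := \tilde F(\hat g^m \hat x)$ is a basic nilsequence defined for every $m \in \mathbb{Z}$ that agrees with $F(g^{Q(n)} x)$ whenever $m = dn + q$.

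With this setup, the relevant average equals, up to $o_{N \to \infty}(1)$,
\[
d \cdot \mathbb{E}_{m \in [dN]} \mathbbm{1}_{m \equiv q \pmod d}\,\bigl(\Lambda'_{W, b'}(m) - 1\bigr)\, H(m).
\]
Expanding $\mathbbm{1}_{m \equiv q \pmod d} = \frac{1}{d}\sum_{k=0}^{d-1} e^{2\pi i k(m-q)/d}$ produces a sum of $d$ terms of the form $e^{-2\pi i kq/d} \, \mathbb{E}_{m \in [dN]}\bigl(\Lambda'_{W,b'}(m)-1\bigr) G_k(m)$, where $G_k(m) := e^{2\pi i km/d} H(m)$ is itself a basic nilsequence on the product nilmanifold $\mathbb{T} \times \tilde X$ (with translation $(k/d, \hat g)$ and base $(0,\hat x)$). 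Proposition \ref{proposition:green-tao1}, applied to each $G_k$ with the identity polynomial in place of $Q$, bounds each term by $o_{\omega\to\infty}(1)$ uniformly in $b'$; since $d$ is fixed and there are only finitely many pairs $(k,q) \in \{0, \dots, d-1\}^2$, both the summation in $k$ and the maximum over admissible $b$ preserve the $o_{\omega \to \infty}(1)$ bound.

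The main obstacle I anticipate is the ``$d$-th root'' step that converts $\tilde g^{(m-q)/d}$ (meaningful only for $m \equiv q \pmod d$) into the honest basic nilsequence $H(m)$ defined for all $m \in \mathbb{Z}$. This requires the Lie group to be connected so that $\hat g = \tilde g^{1/d}$ is well defined; fortunately, the reduction from Section \ref{subsec:nilseq} permits this assumption without loss of generality for any basic nilsequence. Once $\hat g$ is in place, the remaining steps are a routine Fourier expansion of the residue indicator followed by $d$ applications of Proposition \ref{proposition:green-tao1}.
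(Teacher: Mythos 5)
Your argument is correct, and it reaches Proposition \ref{proposition:green-tao1} by a different mechanism than the paper for handling the residue class modulo $d$. The paper invokes Leibman's patching lemma (\cite[Lemma 2.4]{leib10}) to produce a single basic nilsequence $F'(g'^{m} x')$ that equals $F(g^{Q(n)}x)$ when $m = dn$ and vanishes when $m \not\equiv 0 \pmod d$; applying Proposition \ref{proposition:green-tao1} to this sequence and then restricting the average to its support produces the factor $1/d$ and the weight $\Lambda'_{dW,b}$ directly, with one application of Green--Tao per residue $b$. You instead extend the nilsequence to all of $\mathbb{Z}$ by extracting a $d$-th root $\hat g$ of $\tilde g$ in the connected, simply connected setting (which is essentially how the patching lemma is proved in the first place), and you isolate the class $m \equiv q \pmod d$ by Fourier-expanding its indicator, at the cost of $d$ applications of Green--Tao on the enlarged nilmanifold $\mathbb{T}\times\tilde X$. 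The two routes are equally valid: yours avoids citing the patching lemma but re-derives its essential content, while the paper's is slightly shorter because the vanishing of $F'$ off the progression does the work of your character sum. The supporting details in your write-up check out: the identity $\Lambda'_{dW,\,qW+b'}(n) = \Lambda'_{W,b'}(dn+q)$ is valid once every prime factor of $d$ divides $W$, so that $\phi(dW)/(dW)=\phi(W)/W$; the uniformity in $b'$ is exactly what the maximum in Proposition \ref{proposition:green-tao1} supplies; and since only the finitely many pairs $(k,q)$ with $0\le k,q<d$ produce distinct nilsequences $G_{k}$ (note $\hat x$ depends on $q$), the maximum over all admissible $b<dW$ remains $o_{\omega\to\infty}(1)$.
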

\begin{proof}
    By \cite{leib05-2}, sequence $(F^{Q(n)} x)_{n \in \mathbb{Z}}$ is a basic nilsequence. Therefore, according to Leibman \cite[Lemma 2.4]{leib10}, there exists a basic nilsequence $(F'(g'^{n} x'))_{n \in \mathbb{N}}$ such that $F'(g'^{dn + r} x') = F(g^{Q(n)} x)$ for $r = 0$ and $F'(g'^(dn+r) x') = 0$ for $1 \leq r \leq d-1$ for all $n \in \mathbb{N}$. Note that for sufficient large $\omega$, all prime divisors of $d$ divide $W$. Therefore for $b \in \mathbb{N}$ we have $(b ,W) = 1$ if and only if $(b, dW) = 1$. It follows that $\phi(dW) = d \phi(W)$. Thus $\phi(dW)/(dW) = \phi(W)/W$. By Proposition \ref{proposition:green-tao1}, we have
    \begin{equation}
    \label{eq:1}
        \max_{b < dW, (b, dW) =1} \left| \lim_{N \rightarrow \infty} \mathbb{E}_{n \in [N]} \left( \frac{\phi(dW)}{dW} \Lambda'(Wn + b) - 1 \right)  F'(g'^n x') \right| = o_{\omega \rightarrow \infty}(1).
    \end{equation}
    
    Note that the sequence $(F'(g'^n x'))_{n \in \mathbb{N}}$ is supported on the set $\{n \in \mathbb{N}: n = dm \mbox{ for some } m \in \mathbb{N}\}$. Replacing $n$ by $dm$, the left hand side of Equation \ref{eq:1} is now equal to
    \[
        \frac{1}{d} \max_{b < dW, (b, dW) =1} \left| \lim_{N \rightarrow \infty} \mathbb{E}_{m \in [N]} \left( \frac{\phi(dW)}{dW} \Lambda'(Wdm + b) - 1 \right)  F'(g'^{dm} x') \right|.
    \]
    Our proof is finished by noting that $F'(g'^{dm} x') = F(g^{Q(m)} x)$ for all $m \in \mathbb{N}$.
\end{proof}

We need two more lemmas before turning to the main theorem.

\begin{lemma}
    \label{lem1}
        Let $a(n)$ be a bounded sequence. Define $Q_{r,d}(N) := \{ 1 \leq q \leq N: qd+r \in \mathbb{P}\}$. Then 
        \[
            \mathbb{E}_{q \in Q_{r,d}(N)} a(q) - \mathbb{E}_{n \in [N]} \Lambda'_{d,r}(n) a(n) = o_{N \rightarrow \infty}(1). 
        \]
    \end{lemma}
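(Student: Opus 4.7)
The plan is to rewrite both averages as sums over $Q_{r,d}(N)$ and then use Abel summation together with the prime number theorem in arithmetic progressions (PNT-AP) to match them. When $\gcd(r,d) > 1$ the statement is vacuous ($\Lambda'(dn+r) = 0$ for all but finitely many $n$, and $|Q_{r,d}(N)|$ stays bounded), so I will assume $(r,d) = 1$ throughout.

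First I would unpack the definition of $\Lambda'_{d,r}$ to obtain
\[
    \mathbb{E}_{n \in [N]} \Lambda'_{d,r}(n)\, a(n) \;=\; \frac{\phi(d)}{dN} \sum_{n \in Q_{r,d}(N)} \log(dn+r)\, a(n),
\]
which sits alongside $\mathbb{E}_{q \in Q_{r,d}(N)} a(q) = |Q_{r,d}(N)|^{-1} \sum_{n \in Q_{r,d}(N)} a(n)$. By PNT-AP one has $|Q_{r,d}(N)| \sim \frac{dN}{\phi(d)\log(dN)}$, which is precisely the normalization needed for the two expressions to agree up to replacing the varying weight $\log(dn+r)$ inside the sum by the essentially constant $\log(dN)$. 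The whole argument then reduces to controlling this weight replacement.

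For that step I would apply Abel summation to $S(N) := \sum_{n \in Q_{r,d}(N)} \log(dn+r)\, a(n)$ with partial sums $A(k) := \sum_{n \in Q_{r,d}(N),\, n \leq k} a(n)$. This produces a boundary term $A(N)\log(dN+r)$ and a telescoping tail whose increments are $\log(1 + d/(dk+r)) = O(1/k)$. Since $a$ is bounded, PNT-AP gives $|A(k)| \leq \|a\|_{\infty}\, |Q_{r,d}(k)| = O(k/\log k)$, so the tail contributes $O(N/\log N)$ which, after multiplication by $\phi(d)/(dN)$, is $o(1)$. It then remains to compare $\frac{\phi(d)\log(dN+r)}{dN} A(N)$ with $A(N)/|Q_{r,d}(N)|$; the coefficient discrepancy is $o(\log N/N)$ by the same PNT-AP asymptotic, and $|A(N)| = O(N/\log N)$, so the product is again $o(1)$.

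The main obstacle is purely bookkeeping: keeping the various $o(1)$ and $O(1/\log N)$ error terms aligned across the Abel-summation step and the two uses of PNT-AP. There is no genuine analytic difficulty, since $d$ is fixed and the classical fixed-modulus PNT-AP is enough; no uniformity in $d$ (such as Siegel--Walfisz) is needed.
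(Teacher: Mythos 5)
Your argument is correct, but it runs through a different mechanism than the paper's. You perform Abel summation on $S(N)=\sum_{n\in Q_{r,d}(N)}\log(dn+r)\,a(n)$, bound the telescoping tail by $|A(k)|\cdot O(1/k)=O(1/\log k)$, and then match the boundary term $\frac{\phi(d)\log(dN+r)}{dN}A(N)$ against $A(N)/|Q_{r,d}(N)|$ using the prime number theorem in arithmetic progressions. The paper instead compares the two weights pointwise: it writes the difference as $\frac{1}{N}\sum_{q\in Q_{r,d}(N)}a(q)\bigl(\tfrac{N}{\pi_{r,d}(N)}-\tfrac{\phi(d)}{d}\Lambda'(dq+r)\bigr)$, replaces $N/\pi_{r,d}(N)$ by $\tfrac{\phi(d)}{d}\log N$, strips off the bounded piece $\log(d+r/q)$, and then exploits the fact that the remaining weight $\tfrac{\phi(d)}{d}(\log N-\log q)$ is \emph{nonnegative}, so the whole sum is dominated by $\lVert a\rVert_\infty$ times the difference of two normalized counts, each of which tends to $1$ (by Dirichlet/PNT-AP and by the Siegel--Walfisz form of the prime number theorem in progressions, respectively). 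Your partial-summation route avoids that one-signedness observation and makes the error decomposition more mechanical; the paper's route avoids summation by parts entirely. Both ultimately consume the same number-theoretic input, and you are right that for fixed $d$ the classical PNT-AP suffices and no uniformity in the modulus is needed. One small caveat: the case $\gcd(r,d)>1$ is not literally vacuous --- e.g.\ for $r=0$ and $d$ prime the set $Q_{r,d}(N)$ is the nonempty finite set $\{1\}$ and the first average equals $a(1)$, so the statement can fail --- but the paper makes the same implicit coprimality assumption (its invocation of Dirichlet's theorem requires it), and the lemma is only ever applied with $(r,d)=1$, so this does not affect the result.
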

    \begin{proof}
        Let $\pi_{r,d}(N)$ be the cardinality of $Q_{r,d}(N)$. By Dirichlet's Theorem about primes on arithmetic progession, the set of primes that are congruent to $r \pmod d$ has density $\frac{1}{\phi(d)}$ in the set of primes. Therefore,
         \[
            \pi_{r,d}(N) \sim \frac{\pi(dN + r)}{\phi(d)} \sim \frac{dN + r}{\log(dN + r) \phi(d)} \sim \frac{dN}{(\log N) \phi(d)}.
        \]
        Note that $\Lambda'(dn+r) = 0$ if $dn + r \not \in \mathbb{P}$. Therefore
        \begin{multline*}
            u(N) := \left| \mathbb{E}_{q \in Q_{r,d}(N)} a(q) - \mathbb{E}_{n \in [N]} \frac{\phi(d)}{d} \Lambda'(dn + r) a(n) \right|\\
            = \left| \frac{1}{N}  \sum_{q \in Q_{r,d}(N)} a(q) \left(  \frac{N}{\pi_{r,d}(N)} - \frac{\phi(d)}{d} \Lambda'(dq+r) \right) \right|\\
            = \left| \frac{1}{N} \sum_{q \in Q_{r,d}(N)} a(q) \left( \frac{\phi(d)}{d} \log N - \frac{\phi(d)}{d} \log (dq + r) \right) \right| + o_{N \rightarrow \infty}(1)
        \end{multline*}
        
        Note that $\log(dq + r) = \log(q) + \log(d + r/q)$. And we have
        \begin{multline*}
            \left| \frac{1}{N} \sum_{q \in Q_{r,d}(N)} a(q) \log \left( d + \frac{r}{q} \right) \right| \leq \left \Vert a \right \Vert_{\infty} \frac{1}{N} \sum_{q \in Q_{r,d}(N)} \log \left( d + \frac{r}{q} \right) \\
            = \left \Vert a \right \Vert_{\infty} \frac{\pi_{r,d}(N) \log (d+ r/q)}{N} \\ =\left \Vert a \right \Vert_{\infty} \frac{(Nd + r) \log(d + r/q)}{\phi(d) \log(Nd+r) N} + o_{N \to \infty}(1) = o_{N \to \infty}(1).
        \end{multline*}
        
        Therefore,
        \[
            u(N) = \left| \frac{1}{N} \sum_{q \in Q_{r,d}(N)} a(q) \left( \frac{\phi(d)}{d} \log N - \frac{\phi(d)}{d} \log (q) \right) \right| + o_{N \rightarrow \infty}(1)
        \]
        
        Since $\log q < \log N$ for all $q \in Q_{r,d}(N)$, we have
        \begin{multline*}
            u(N) \leq \left \Vert a \right \Vert_{\infty} \left( \frac{1}{N} \sum_{q \in Q_{r,d}(N)} \frac{\phi(d)}{d} \log N  - \frac{1}{N} \sum_{q \in Q_{r,d}(N)} \frac{\phi(d)}{d} \log(q) \right) + o_{N \rightarrow \infty}(1) \\
            = \left \Vert a \right \Vert_{\infty} \left( \frac{1}{N} \sum_{q \in Q_{r,d}(N)} \frac{\phi(d)}{d} \log N  - \frac{1}{N} \sum_{q \in Q_{r,d}(N)} \frac{\phi(d)}{d} \log(qd + r) \right) + o_{N \rightarrow \infty}(1)\\
            = \left \Vert a \right \Vert_{\infty} \left( \frac{1}{N} \sum_{q \in Q_{r,d}(N)} \frac{\phi(d)}{d} \log N  - \frac{1}{N} \sum_{q \in Q_{r,d}(N)} \frac{\phi(d)}{d} \Lambda'(qd+r) \right) + o_{N \rightarrow \infty}(1)
        \end{multline*}
        
        By Dirichlet's Theorem, 
        \[
            \frac{1}{N} \sum_{q \in Q_{r,d}(N)} \frac{\phi(d)}{d} \log N = \frac{\phi(d) \log N}{dN} \pi_{r,d}(N) = 1 + o_{N \to \infty}(1)
        \]
        On the other hand, by Siegel-Walfisz's Theorem \cite{Walfisz1936} on average of the von Malgoldt functions on arithmetic progressions, 
        \[
            \frac{1}{N} \sum_{q \in Q_{r,d}(N)} \frac{\phi(d)}{d} \Lambda'(qd+r) = 1 + o_{N \to \infty}(1)
        \]
        So $u(N) \leq o_{N \to \infty}(1)$. Our proof finishes. 
\end{proof}

\begin{proposition}
    \label{proposition:prime-poly-q_n}
    Fix $d \in \mathbb{N}$ and $r \in \{0,1, \ldots, d-1\}$. For $n \in \mathbb{N}$, define $q_n$ to be the $n^{th}$ integer such that $d q_n + r$ is a prime. Let $(X = G/\Gamma, \mu_X, \tau)$ be a totally ergodic nilsystem. Then for any $Q(n) \in \mathbb{Z}[n]$ non-constant and $x \in X$, the sequence $(\tau^{Q(q_n)} x)_{n \in \mathbb{N}}$ is equidistributed on $X$.
\end{proposition}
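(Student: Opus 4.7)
The plan is to relate the desired equidistribution to a $\Lambda'_{d,r}$-weighted polynomial ergodic average via Lemma \ref{lem1}, peel off the weight using Corollary \ref{corollary:green-tao-d}, and conclude by total ergodicity. First I would observe that we may assume $(r,d) = 1$, since otherwise $dq + r$ is prime only finitely often and the sequence $(q_n)$ is not infinite. Fix $F \in C(X)$. Because the first $N$ elements of $(q_n)$ are exactly the points of $Q_{r,d}(q_N)$, proving equidistribution of $(\tau^{Q(q_n)} x)$ is equivalent to
\[
\lim_{N \to \infty} \mathbb{E}_{q \in Q_{r,d}(N)} F(\tau^{Q(q)} x) = \int_X F\, d\mu_X,
\]
and Lemma \ref{lem1} applied to the bounded sequence $a(q) := F(\tau^{Q(q)} x)$ reduces this further to the claim
\[
(\star)\qquad \lim_{N \to \infty} \mathbb{E}_{n \in [N]} \Lambda'_{d,r}(n)\, F(\tau^{Q(n)} x) = \int_X F\, d\mu_X.
\]

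To establish $(\star)$, I would pick $\omega$ large, set $W = \prod_{p < \omega,\, p \in \mathbb{P}} p$, and decompose $n \in [NW]$ as $n = Wm + s$ with $0 \le s < W$ and $m \in [N]$. Writing $b(s) := ds + r \in [0, dW-1]$, the assumption $(r,d) = 1$ ensures $(b(s), dW) = 1$ if and only if $(b(s), W) = 1$, and for such $s$ a direct manipulation of the definitions yields
\[
\Lambda'_{d,r}(Wm+s) = \frac{\phi(d)W}{\phi(dW)}\, \Lambda'_{dW, b(s)}(m);
\]
the remaining residues $s$ contribute $O((\log W)/N) = o(1)$ since then $\Lambda'(dWm + b(s))$ is supported on at most finitely many $m$. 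For each valid $s$, the polynomial $\tilde Q_s(m) := Q(Wm + s)$ is a non-constant element of $\mathbb{Z}[m]$, so Corollary \ref{corollary:green-tao-d} (applied with $\tilde Q_s$ in place of $Q$) gives
\[
\lim_{N \to \infty} \mathbb{E}_{m \in [N]} \Lambda'_{dW, b(s)}(m)\, F(\tau^{\tilde Q_s(m)} x) = \lim_{N \to \infty} \mathbb{E}_{m \in [N]} F(\tau^{\tilde Q_s(m)} x) + o_{\omega \to \infty}(1),
\]
and total ergodicity of $(X,\tau)$ (equivalently, connectedness of $X$) identifies the right-hand side with $\int_X F\, d\mu_X + o_{\omega \to \infty}(1)$.

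Summing over the valid $s$ and dividing by $W$ then gives
\[
\lim_{N \to \infty} \mathbb{E}_{n \in [NW]} \Lambda'_{d,r}(n)\, F(\tau^{Q(n)} x) = \frac{\phi(d)\, C}{\phi(dW)} \int_X F\, d\mu_X + o_{\omega \to \infty}(1),
\]
where $C := |\{s \in [0, W-1] : (ds + r, W) = 1\}|$. The crucial clean-up step is the identity $C = \phi(dW)/\phi(d)$, which I would prove by noting that the reduction-mod-$d$ map $(\mathbb{Z}/dW\mathbb{Z})^{\ast} \to (\mathbb{Z}/d\mathbb{Z})^{\ast}$ is a surjective group homomorphism whose fibers have uniform size $\phi(dW)/\phi(d)$, and that $\{ds + r \pmod{dW} : 0 \le s < W\}$ is precisely the coset $r + d\mathbb{Z}$ in $\mathbb{Z}/dW\mathbb{Z}$. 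This makes the coefficient in front of $\int_X F\, d\mu_X$ equal to $1$; letting $\omega \to \infty$ (the left-hand side does not depend on $\omega$) and comparing Ces\`aro averages on $[N]$ versus $[NW]$ (with boundary error $O((\log N)/N)$) then yields $(\star)$.

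The main obstacle I foresee is bookkeeping rather than conceptual: carefully handling the error contributions from residues $s$ with $(ds+r, W) > 1$, verifying the combinatorial identity $C = \phi(dW)/\phi(d)$, and ensuring that the limit in $(\star)$ actually exists. For the last point I would invoke the same Wooley--Ziegler and Frantzikinakis--Host--Kra inputs used in the proof of Proposition \ref{proposition:poly-prime-GPN}, or else run the above argument simultaneously on $\limsup$ and $\liminf$.
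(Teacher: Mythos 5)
Your proposal is correct and follows essentially the same route as the paper's proof: reduce via Lemma \ref{lem1} to the $\Lambda'_{d,r}$-weighted polynomial average, split $[NW]$ into the progressions $b = ds + r \pmod{dW}$, remove the weight on each progression with Corollary \ref{corollary:green-tao-d}, and finish by total ergodicity. The only (cosmetic) differences are that the paper normalizes $\int_X f\,d\mu_X = 0$ so that it never needs the exact count $|\{s : (ds+r,dW)=1\}| = \phi(dW)/\phi(d)$ that you verify, and it applies Lemma \ref{lem1} at the end rather than the beginning.
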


\begin{proof}
    Let $f \in C(X)$. Replacing $f$ by $f - \int f $, we can assume $\int f = 0$. For any $x \in X$, we want to show
    \[ 
        \lim_{N \rightarrow \infty} \mathbb{E}_{n \in [N]} f(\tau^{Q(q_n)} x) = 0.
    \]

    For sufficiently large $\omega \in \mathbb{N}$, let $B_{\omega} = \{0 \leq b < dW: (b, dW) = 1, b \equiv r \pmod d \}$. By Corollary \ref{corollary:green-tao-d}, for $b \in B$,
    \[
        \lim_{N \rightarrow \infty} \mathbb{E}_{n \in [N]} \left( \Lambda'_{dW,b}(n) -1 \right) f\left( \tau^{Q(Wn + \frac{b-r}{d})} x \right) = o_{\omega \rightarrow \infty}(1).
    \]
    Note that the term $o_{\omega \rightarrow \infty}(1)$ does not depend on $b$. Therefore
    
    \begin{multline}
    \label{equation:poly-prime-3}
        \lim_{N \rightarrow \infty} \mathbb{E}_{n \in [N]} \Lambda'_{dW, b}(n) f\left(\tau^{Q(Wn + \frac{b-r}{d})} x \right) = \\
        =\lim_{N \rightarrow \infty} \mathbb{E}_{n \in [N]} f\left(\tau^{Q(Wn + \frac{b-r}{d})} x \right) + o_{\omega \rightarrow \infty}.
    \end{multline}
    Since $(X, \mu_X, g)$ is totally ergodic, for any $b \in B$, the sequence $(\tau^{Q(Wn + \frac{b-r}{d})})$ is equidistributed on $X$ (see Section ??). That means
    \[
        \lim_{N \rightarrow \infty} \mathbb{E}_{n \in [N]} f\left( \tau^{Q(Wn + \frac{b-r}{d})} x \right) = 0.
    \]
    Hence, (\ref{equation:poly-prime-3}) implies 
    \begin{equation}
        \label{equa1}
        \lim_{N \rightarrow \infty} \mathbb{E}_{n \in [N]} \Lambda'_{dW,b}(n) f \left( \tau^{Q(Wn + \frac{b-r}{d})} x \right) = o_{\omega \rightarrow \infty}(1).
    \end{equation}
    
    Let $B'_{\omega} = \{ 0 \leq b < dW: b \equiv r \pmod d\}$. In (\ref{equa1}), summing over $b \in B_{\omega}'$ and noting that $\Lambda'(dWn + b) = 0$ if $b \in B_{\omega}' \setminus B_{\omega}$, we get
    \begin{equation}
        \sum_{b \in B_{\omega}'} \lim_{N \rightarrow \infty} \mathbb{E}_{n \in [N]} \Lambda'_{dW, b}(n) f\left( \tau^{Q(Wn + \frac{b-r}{d})} x \right)
        = |B_{\omega}| o_{\omega \rightarrow \infty}(1).
    \end{equation}
    
    Now dividing both sides by $W$ (which is the cardinality of $B_{\omega}'$), we get
    \begin{equation}
    \label{equation:poly-prime-4}
        \lim_{N \rightarrow \infty} \mathbb{E}_{n \in [N]} \mathbb{E}_{b \in B_{\omega}'} \Lambda'_{dW,b}(n) f\left(\tau^{Q(Wn + \frac{b-r}{d})} x \right) = \frac{|B_{\omega}|}{W} o_{\omega \rightarrow \infty}(1).
    \end{equation}
    
    The left hand side now is equal to 
    \[
        \lim_{N \rightarrow \infty} \mathbb{E}_{n \in [WN]} \frac{\phi(dW)}{dW} \Lambda'(dn + r) f\left( \tau^{Q(n)} x \right)
    \]
    
    Multiplying both sides of (\ref{equation:poly-prime-4}) by $\phi(d) W/\phi(dW)$, we get
    \begin{equation}
    \label{equation:poly-prime-5}
        \lim_{N \rightarrow \infty} \mathbb{E}_{n \in [WN]} \Lambda'_{d,r}(n) f\left(\tau^{Q(n)} x \right) =  \frac{\phi(d) |B_{\omega}|}{\phi(dW)} o_{\omega \rightarrow \infty}(1) = o_{\omega \to \infty}(1).
    \end{equation}
    
    Note that
    \begin{equation*}      
        \mathbb{E}_{n \in [N]} \Lambda'_{d,r}(n) f(\tau^{Q(n)} x) = \mathbb{E}_{n \in [W \lfloor N/W \rfloor]} \Lambda'_{d,r}(n) f(\tau^{Q(n)} x) + o_{\omega, N \rightarrow \infty}(1).
    \end{equation*}
    
    Therefore (\ref{equation:poly-prime-5}) implies
    \[
        \lim_{N \rightarrow \infty} \mathbb{E}_{n \in [N]} \Lambda'_{d,r}(n) f\left(\tau^{Q(n)} x \right) = o_{\omega \rightarrow \infty}(1).
    \]
    
    The left hand side no longer depends on $\omega$. By letting $\omega$ approaches infinity, we get
    \[
        \lim_{N \rightarrow \infty} \mathbb{E}_{n \in [N]} \Lambda'_{d,r}(n) f\left(\tau^{Q(n)} x \right) = 0.
    \]
    
     Applying Lemma \ref{lem1} to $a(n) = f(\tau^{Q(n)} x)$, our proposition is proved.
\end{proof}

We are ready to prove the main result of this section.

\begin{proposition}
    \label{proposition:poly-prime-EGEN}
    For $Q \in \mathbb{Z}[n]$ non-constant, the sequence $(Q(p_n))$ is EGEN.
\end{proposition}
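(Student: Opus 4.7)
The plan is to reduce the EGEN property of $(Q(p_n))$ to Proposition \ref{proposition:prime-poly-q_n} by decomposing the primes into arithmetic progressions modulo $d$ and passing to the action of $\tau^d$ on each connected component of $X$.

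I first note that for $p_n > d$ the residue $p_n \bmod d$ lies in $(\mathbb{Z}/d\mathbb{Z})^\times$, so $Q(p_n) \bmod d$ belongs to the finite set $\{Q(a) \bmod d : a \in (\mathbb{Z}/d\mathbb{Z})^\times\}$; for any $s$ outside this set the condition $Q(p_n) \equiv s \pmod d$ holds for only finitely many $n$ and the EGEN requirement is vacuous. Fixing $s$ inside this set, let $A_s = \{a \in (\mathbb{Z}/d\mathbb{Z})^\times : Q(a) \equiv s \pmod d\}$. By Dirichlet's theorem, for every $a \in A_s$ the set $\{n : p_n \equiv a \pmod d\}$ has density $1/\phi(d)$ in $\mathbb{N}$, so $\{n : Q(p_n) \equiv s \pmod d\}$ decomposes, up to a set of density zero, as the disjoint union over $a \in A_s$ of $|A_s|$ subsets of equal density.

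Next, for each $a \in A_s$ I expand $Q(dq + a) = d R_a(q) + s$, where
\[
    R_a(q) := \frac{Q(a) - s}{d} + \frac{Q(dq + a) - Q(a)}{d} \in \mathbb{Z}[q]
\]
is a polynomial of degree $\deg Q \geq 1$, hence non-constant. Letting $q_{a, n}$ denote the $n^{th}$ positive integer for which $d q_{a,n} + a$ is prime, so that $p_{a, n} := d q_{a,n} + a$ is the $n^{th}$ prime congruent to $a$ modulo $d$, one obtains the key identity
\[
    \tau^{Q(p_{a,n})} 1_X = \tau^s \, (\tau^d)^{R_a(q_{a,n})} 1_X.
\]

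Third, since $X$ has exactly $d$ connected components cyclically permuted by $\tau$, the component $X_s := \tau^s X_0$ is connected and $\tau^d$-invariant, and the facts recalled in Section \ref{subsubsec:connected-nilorbits} give that $\tau^d$ acts ergodically on $(X_s, \mu_{X_s})$; because $X_s$ is connected this action is in fact totally ergodic. I then apply Proposition \ref{proposition:prime-poly-q_n} to the totally ergodic nilsystem $(X_s, \mu_{X_s}, \tau^d)$, the non-constant polynomial $R_a$, the residue pair $(d, a)$, and the base point $\tau^s 1_X \in X_s$, which yields for each $a \in A_s$ that the sequence $((\tau^d)^{R_a(q_{a, n})} \tau^s 1_X)_n$ is equidistributed on $X_s$. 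Averaging the $|A_s|$ resulting subsequences with equal weights $1/|A_s|$---each contributing the same limit $\int_{X_s} f\, d\mu_{X_s}$ for every $f \in C(X)$---yields equidistribution of $(\tau^{Q(p_n)} 1_X)_{n : Q(p_n) \equiv s \pmod d}$ on $X_s$, which is the EGEN property. The main delicate step is this final recombination: it relies on Dirichlet's theorem for the equal density of the $a$-fibers and on the uniformity of the limit $\mu_{X_s}$ across all $a \in A_s$, both of which are built into the setup above.
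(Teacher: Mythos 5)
Your proof is correct and follows essentially the same route as the paper: both reduce the claim to Proposition \ref{proposition:prime-poly-q_n} applied to the totally ergodic nilsystem $(\tau^{s} X_0, \mu_{X_0}, \tau^{d})$ with the integer polynomial $(Q(dq+a)-s)/d$ and the integers $q$ for which $dq+a$ is prime. The only difference is that you carry out explicitly the final recombination over the several residues $a \in (\mathbb{Z}/d\mathbb{Z})^{\times}$ with $Q(a) \equiv s \pmod d$ (using Dirichlet's theorem for the equal densities), a step the paper treats per fixed residue $r$ and then leaves implicit in ``then our proposition follows.''
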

\begin{proof}
    Let $(X = G/\Gamma, \mu_X, \tau)$ be an ergodic nilsystem. Suppose $X$ has $d$ connected components and $X_0$ is the component containing $1_X$. For $(r,d) = 1$, we need to show the sequence $(\tau^{Q(p_{r \pmod d, n})} 1_X)_{n \in \mathbb{N}}$ is equidistributed on $\tau^{Q(r)} X_0$.
    
    Let $q_n = (p_{r \pmod d, n} - r)/d$. Applying Proposition \ref{proposition:prime-poly-q_n} to totally ergodic nilsystem $(\tau^{Q(r)} X_0, \tau^{Q(r)} \mu_{X_0}, \tau^d)$, polynomial $P(n) = (Q(dn+r)-Q(r))/d \in \mathbb{Z}[n]$, we get sequence $(\tau^{P(q_n)} \tau^{Q(r)} 1_X)$ is equidistributed on $\tau^{Q(r)} X_0$. Then our proposition follows.
\end{proof}

\subsection{Proof of Proposition \ref{proposition:poly-prime-good}}

    By Proposition \ref{proposition:poly-prime-GPN}, $(Q(p_n))$ is polynomial-GPN. By Proposition \ref{proposition:poly-prime-EGEN}, the same sequence is EGEN. Hence by definition, $(Q(p_n))$ is polynomial-good.
    
\subsection{Proof of Theorem \ref{theorem:1}}

By Proposition \ref{proposition:nullity-along-polynomial-good-sequence-general-system}, the null component of a polynomial correlation is null along any polynomial-good sequence. The sequence $(Q(n))$ is polynomial-good as pointed out in Section \ref{subsection:good-sequences}. On the other hand, the sequence $(Q(p_n))$ is polynomial-good by Proposition \ref{proposition:poly-prime-good}. Hence the first part of Theorem \ref{theorem:1} is proved. Same argument applies to linear correlations and linear-good sequence $(\lfloor n^c \rfloor)$. That proves the second part of the theorem. 

\subsection{Proof of Corollary \ref{corolarry:average-along-prime-formula}}

Let $(X = G/\Gamma, \mu, \tau)$ be an ergodic nilsystem with $d$ connected component and the set-up as in Corollary \ref{corolarry:average-along-prime-formula}. Since $(Q(p_n))$ is EGEN, for each $s < d$ with $(s, d) =1 $, the sequence $(\tau^{Q(p_{s \pmod d,n})} x)$ is equidistributed on $X_{Q(s) + k}$. Hence
\begin{equation}
    \label{equation:average-along-prime-formula-1}
    \mathbb{E}_{p \in \mathbb{P}, p \equiv s \pmod d} f(\tau^{Q(p)} x) = \int_{X_{Q(s)+k}} f \, d \mu_{X_{Q(s) + k}}
\end{equation}
For each $s < d$ with $(s,d) = 1$, the set $\{p \in \mathbb{P}: p \equiv s \pmod d\}$ has density $1 /\phi(d)$. Taking average for all $s$, we have Corollary \ref{corolarry:average-along-prime-formula}.

\section{Proof of Proposition \ref{proposition:3^n}}
\label{section:not-null-along-3^n}

\begin{comment}
Bergelson, Host and Kra \cite{Bergelson_Host_Kra05} prove that a multiple correlation sequence $(a(n))_{n \in \mathbb{N}}$ can be decomposed as a sum of a nilsequence $(\psi(n))_{n \in \mathbb{N}}$ and a sequence that is null along $(n)_{n \in \mathbb{N}}$ (called a null sequence). Le \cite{le17} extends their result by showing that the null sequence $(\epsilon(n))_{n \in \mathbb{N}}$ is also null along any integer polynomials or integer polynomial evaluated at primes. A natural question arises: Is there an increasing sequence of positive integers $(r_n)_{n \in \mathbb{N}}$ such that $(\epsilon(n))_{n \in \mathbb{N}}$ is not null along $(r_n)_{n \in \mathbb{N}}$. Le \cite{le17} shows that there must exists such sequence $(r_n)_{n \in \mathbb{N}}$, but gives no concrete example of $(r_n)_{n \in \mathbb{N}}$. In this note, to fill that gap, we give an example of a correlation whose null sequence is not null along $(3^n)_{n \in \mathbb{N}}$.
\end{comment}

In this section, we prove Proposition \ref{proposition:3^n}. Some preliminary facts are needed before going into the proof. By Herglotz's Theorem, for $f \in L^2(\mu)$, there exists a complex measure $\sigma$ on circle $\mathbb{T}$ such that $a(n) := \int_X f \cdot T^n \bar{f} \, d \mu = \int_{\mathbb{T}} e^{2 \pi i n t} \, d \sigma(t) =: \hat{\sigma}(n)$ for all $n \in \mathbb{Z}$. The measure $\sigma$ is called the spectral measure of $f$. Some times we denote it by $\sigma_f$ to indicates the dependence on $f$. By decomposing $\sigma$ to discrete and continuous parts, we get $a(n) = \hat{\sigma}_{d}(n) + \hat{\sigma}_{c}(n)$. The sequence $(\hat{\sigma}_d(n))_{n \in \mathbb{N}}$ and $(\hat{\sigma}_c(n))_{n \in \mathbb{N}}$ are the nil and null components of $(a(n))_{n \in \mathbb{N}}$, respectively. We have a proposition.

\begin{proposition}
	\label{proposition:not-null-stronger}
    Let $(r_n)$ be a increasing sequence of integers such that there is a continuous measure $\sigma$ on $\mathbb{T}$ such that $(\hat{\sigma}(n))$ is not null along $(r_n)$. Then there exists a linear correlation whose null component is not null along $(r_n)$.
\end{proposition}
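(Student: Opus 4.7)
The plan is to realize $(\hat\sigma(n))$ as a uniform limit of linear correlation sequences whose null components coincide with the sequences themselves, and then to transfer the non-nullity along $(r_n)$ by uniform control. To begin, I would reduce to the case where $\sigma$ is a positive finite measure: decomposing $\sigma$ into real and imaginary parts and then into positive and negative variations, at least one of the four resulting positive measures is continuous and has Fourier coefficients that are not null along $(r_n)$. Having made this reduction, I would apply the Gaussian construction of Section \ref{subsection:Gaussian-system} to obtain a measure preserving system $(X,\mu,T)$ and a mean-zero function $g \in L^2(\mu)$ with $\hat\sigma(n) = \int_X g\cdot T^n \bar g \, d\mu$. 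The classical fact that a Gaussian system is weakly mixing whenever its driving measure is continuous would then be invoked: in particular the only eigenvalue of $T$ is $1$, with eigenspace equal to the constants.

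Next I would truncate $g$ to obtain a bounded proxy. Setting $g_M = g \cdot \mathbf{1}_{\{|g| \le M\}}$ and $\tilde g_M = g_M - \int g_M \, d\mu$ produces a function $\tilde g_M \in L^\infty(\mu)$ of mean zero with $\tilde g_M \to g$ in $L^2(\mu)$ as $M \to \infty$. Because the system is weakly mixing and $\tilde g_M$ is mean-zero, the spectral measure $\sigma_M$ of $\tilde g_M$ has no atoms, hence is continuous. Therefore the linear correlation
\[
a_M(n) \;:=\; \int_X \tilde g_M \cdot T^n \overline{\tilde g_M} \, d\mu \;=\; \hat\sigma_M(n)
\]
has, by the Herglotz-type decomposition recalled in the introduction, nil component identically zero, so that its null component equals $a_M$ itself.

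The final step is a uniform convergence estimate followed by a choice of $M$. A routine application of Cauchy–Schwarz and $T$-invariance gives
\[
\sup_n |a_M(n) - \hat\sigma(n)| \;\le\; \bigl(\|g\|_{L^2} + \|\tilde g_M\|_{L^2}\bigr) \, \|\tilde g_M - g\|_{L^2},
\]
which tends to $0$ as $M \to \infty$. Setting $c := \limsup_N \tfrac{1}{N}\sum_{n=1}^N |\hat\sigma(r_n)| > 0$ by hypothesis, and choosing $M$ so that the above supremum is below $c/2$, the triangle inequality yields $\limsup_N \tfrac{1}{N}\sum_{n=1}^N |a_M(r_n)| \ge c/2 > 0$, so that $(a_M(n))$ is a linear correlation whose null component is not null along $(r_n)$.

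The main obstacle I anticipate is ensuring that the bounded truncation retains a continuous spectral measure. In a generic ergodic system, cutting off an unbounded function at height $M$ can easily introduce atoms at eigenvalues of $T$, producing a spurious nonzero nil component in the correlation. The Gaussian framework is precisely what eliminates this obstruction: continuity of $\sigma$ forces the Gaussian system to be weakly mixing, which in turn forces every mean-zero $L^2$ function — including every $\tilde g_M$ — to have a continuous spectral measure.
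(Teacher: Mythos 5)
Your argument is correct, and its skeleton (realize $\hat\sigma$ via a Gaussian system, replace the unbounded Gaussian variable $g$ by a bounded approximant, then transfer non-nullity along $(r_n)$ by a uniform $L^2$ perturbation estimate) is the same as the paper's. The one genuine divergence is how the nil component of the approximating correlation is controlled. The paper takes an \emph{arbitrary} bounded $f$ with $\lVert f-g\rVert_{L^2}<\epsilon_1$ and then uses Wiener's lemma to show that the discrete part of $\sigma_f$ has total mass $O(\epsilon_2^2)$, hence uniformly small Fourier coefficients, so the null component $\hat\sigma_{fc}$ stays uniformly close to $\hat\sigma$. You instead choose a \emph{specific} approximant (a centered truncation $\tilde g_M$) and invoke the classical fact that a Gaussian system over a continuous measure is weakly mixing, so that every mean-zero $L^2$ function --- in particular $\tilde g_M$ --- has a non-atomic spectral measure; the nil component is then exactly zero and the null component is the correlation itself. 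Your route buys a cleaner identification of the null component at the cost of an extra ergodic-theoretic input (weak mixing of Gaussian systems), whereas the paper's Wiener-lemma estimate is more self-contained and does not care which bounded approximant is used. Your preliminary reduction to positive measures is a reasonable precaution, since the Gaussian construction recalled in Section \ref{subsection:Gaussian-system} is stated for positive measures; both your reduction and the quantitative steps (the Cauchy--Schwarz bound $\sup_n|a_M(n)-\hat\sigma(n)|\le(\lVert g\rVert_{L^2}+\lVert\tilde g_M\rVert_{L^2})\lVert\tilde g_M-g\rVert_{L^2}$ and the choice of $M$ against $c/2$) check out.
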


\begin{proof}
	Let $(r_n)$ be a increasing sequence of integers and $\sigma$ be a continuous measure satisfy Proposition \ref{proposition:not-null-stronger} hypothesis. Our proposition would have been proved if there existed a bounded function $f$ on a system $(X, \mu, T)$ whose spectral measure $\sigma_f$ is equal to $\sigma$. However, we are only guaranteed the existence of an unbounded $L^2$-function (let call it $g$) from a Gaussian system (see Section \ref{subsection:Gaussian-system}) that satisfy the required condition. To achieve our goal, a small modification is needed. 
    
    Since $L^{\infty}(\mu)$ is dense in $L^2(\mu)$, for a small $\epsilon_1$, there exists $f \in L^{\infty}(\mu)$ such that $\lVert f - g \rVert_{L^2} < \epsilon_1$. That implies:
    \begin{equation}
    	\label{equation:6-1}
    	\lvert \hat{\sigma}_f(n) - \hat{\sigma}_g(n) \rvert = \left \vert \int f T^n \bar{f} - \int g T^n \bar{g} \right \vert < 2 \epsilon_1 \lVert g \rVert_{L^2} + \epsilon_1^2 =: \epsilon_2.
    \end{equation}
    
    Let $\sigma_{gd}$ and $\sigma_{gc}$ denote discrete and continuous parts of $\sigma_g$ respectively. By Wiener's Lemma, since $\sigma_g $ is continuous:
    \begin{equation}
    	\label{equation:6-2}
    	\sigma_{gd}(\mathbb{T}) = \lim_{N \to \infty} \mathbb{E}_{n \in [N]} \left \vert \hat{\sigma}_g(n) \right \vert^2 = 0.
    \end{equation}
    
    And,
    \begin{equation}
    	\label{equation:6-3}
    	\sigma_{fd}(\mathbb{T}) = \lim_{N \to \infty} \mathbb{E}_{n \in [N]} \left \vert \hat{\sigma}_f(n) \right \vert^2
    \end{equation}

	From (\ref{equation:6-1}), (\ref{equation:6-2}) and (\ref{equation:6-3}), we get $\sigma_{gd}(\mathbb{T}) < \epsilon_2^2$. That implies $\lvert \hat{\sigma}_{fd}(n) \rvert < \epsilon_2^2$ for all $n \in \mathbb{N}$. Hence for all $n \in \mathbb{N}$:
    \begin{equation}
    	\label{equation:6-4}
    	\lvert \hat{\sigma}_{fc}(n) - \hat{\sigma}_g(n) \rvert = \lvert \hat{\sigma}_f(n) - \hat{\sigma}_{fd}(n) - \hat{\sigma}_g(n)| < \epsilon_2 + \epsilon_2^2.
    \end{equation}
    
    Since $(\hat{\sigma}_g(n))$ is not null along $(r_n)$, there is $\delta > 0$ such that
    \begin{equation}
    	\label{equation:6-5}
    	\limsup \limits_{N \to \infty} \mathbb{E}_{n \in [N]} \lvert \hat{\sigma}_g(r_n) \rvert = \delta.
    \end{equation}
    
    If we choose $\epsilon_1$ sufficiently small so that $\epsilon_2 + \epsilon_2^2 < \delta$, then from (\ref{equation:6-4}) and (\ref{equation:6-5}), we have
    \begin{equation}
    	\limsup \limits_{N \to \infty} \mathbb{E}_{n \in [N]} \lvert \hat{\sigma}_{fc}(r_n) \rvert > \delta - \epsilon_2 - \epsilon_2^2 > 0.
    \end{equation}
    
    That would imply $(\hat{\sigma}_{fc}(n))$ is not null along $(r_n)$. Therefore, the null component of $\int f T^n \bar{f}$ is not null along $(r_n)$. Our proof finishes. 
\end{proof}

\textbf{Proof of Proposition \ref{proposition:3^n}}. If $(r_n)$ is a rigid sequence, then there is a continuous measure $\sigma$ on $\mathbb{T}$ such that $\hat{\sigma}(r_n) \to 1$ as $n \to \infty$ (see Section \ref{subsection:backgroun-rigidity-sequences}) . Hence $(\hat{\sigma}(n))$ is not null along $(r_n)$, so $(r_n)$ satisfies Proposition \ref{proposition:not-null-stronger} hypothesis.
	
    \begin{remark*}
	In a recent paper, Badea and Grivaux \cite{Badea-Grivaux2018} show that there exists a continuous measure $\sigma$ on $\mathbb{T}$ such that $\liminf_{n,m \in \mathbb{N}} \hat{\sigma}(2^n 3^m) > 0$. Hence sequence $(2^n 3^m)$ when ordering in the increasing fashion also satisfies Proposition \ref{proposition:not-null-stronger} hypothesis. 
    \end{remark*}

\bibliographystyle{plain}
\bibliography{nil-cor}

\end{document}